\documentclass{article}

\usepackage[inner=1in,outer=1in,top=1in,bottom=1in,
marginparwidth=.7in,marginparsep=.1in]{geometry}

\usepackage[labelfont=bf]{caption}
\usepackage{graphicx}
\usepackage{amssymb}
\usepackage{amsmath}
\usepackage{amsthm}
\usepackage{textcomp}
\usepackage{gensymb}
\usepackage{hyperref}
\usepackage{cleveref}
\usepackage{authblk}
\usepackage{mathtools}
\usepackage{color}
\usepackage{url}
\usepackage{xcolor}

\newcommand{\abs}[1]{\lvert#1\rvert}

\allowdisplaybreaks

\usepackage[sorting=none, style=nature, url=false, doi=true, isbn=false]{biblatex}
\graphicspath{{./}}

\newtheorem{theorem}{Theorem}[section]
\newtheorem{observation}{Observation}
\newtheorem{proposition}{Proposition}

\newcommand{\volterra}{\mathcal{V}}
\DeclareMathOperator*{\argmin}{arg\,min}

\begin{document}

\title{Fast rigid rotational alignment of voxelized 3-D density maps in sliced Wasserstein distance}

\author[1]{Oscar Mickelin\thanks{oscarmi@tsinghua.edu.cn}}
\author[2]{Yunpeng Shi\thanks{ypshi@ucdavis.edu}}
\author[3,4]{Amit Singer\thanks{amits@math.princeton.edu}}
\author[5,6]{Eric J. Verbeke\thanks{ev9102@princeton.edu}}

\affil[1]{
Yau Mathematical Sciences Center, Tsinghua University,
Beijing 100084, China
}

\affil[2]{Department of Mathematics, University of California at Davis, Davis CA, USA}

\affil[3]{Department of Mathematics, Princeton University, Princeton, NJ, USA}

\affil[4]{Program in Applied and Computational Mathematics, Princeton University, Princeton, NJ, USA}

\affil[5]{Center for Statistics and Machine Learning, Princeton University, Princeton, NJ, USA}

\affil[6]{Premium Research Institute for Human Metaverse Medicine, The University of Osaka, Japan}

\date{}

\maketitle

\vspace{-0.5cm}

\begin{abstract}
\noindent We develop a fast method for the rigid alignment of voxelized three-dimensional density functions over $\mathsf{SO}(3)$ using sliced optimal transport. Our approach efficiently transforms each density map into a tensor of one-dimensional quantile functions, taken from central slices over the unit sphere, such that the Euclidean distance of the transformed volumes approximates the sliced 2-Wasserstein distance. Our formulation is therefore compatible with existing, fast, voxel-based Euclidean alignment algorithms. We provide theoretical analysis and numerical experiments demonstrating that the sliced 2-Wasserstein distance is a robust metric for comparing three-dimensional shapes and for computing their rigid alignment.
\end{abstract}




\section{Introduction}
\label{sec:intro}

We consider the problem of three-dimensional (3-D) rigid alignment between density functions $f$ and $g$ which are defined on $N \times N \times N$ Cartesian voxel grids, and are supported on the closed unit ball $\mathbb{B} = \{x \in \mathbb{R}^3 : \|x\|_2 \leq 1\}$. Additionally, the functions are assumed to be non-negative with $f \geq 0, g \geq 0$ and normalized to $\int_\mathbb{B} f(x)dx = \int_\mathbb{B} g(x) = 1$, such that they may be thought of as probability density functions and are therefore compatible with optimal transport frameworks described later. Specifically, we seek to find:

\begin{equation}
\label{eq:rotational_distance}
\argmin_{R \in \mathsf{SO}(3)} d(f , Rg) ,
\end{equation}
where $R \in \mathsf{SO}(3)$ is a rotation and $d(\cdot,\cdot) \in [0,\infty)$ denotes a generic distance function. In particular, we are motivated by the case where $f \neq Rg$ for any $R \in \mathsf{SO}(3)$, which we refer to as~\textit{heterogeneous} functions (or heterogeneous 3-D volumes). This type of problem arises in many applications, most prominently in 3-D modeling~\cite{ZhirongWu3D2015} and biological imaging methods such as electron cryo-microscopy (cryo-EM)~\cite{SingerComputational2020}. Typically,~\cref{eq:rotational_distance} is solved using the Euclidean distance or standard correlation as the metric for comparison. However, it is known that the landscape with respect to these metrics can be highly irregular, especially for heterogeneous volumes~\cite{SingerAlignment2024}.

We therefore focus on using metrics from optimal transport, as they are known to be more robust to changes in the geometry of the objects being compared~\cite{VillaniOptimal2009, PeyreComputational2019}. However, these metrics, in particular the Wasserstein distance (or Kantorovich-Rubenstein distance), come with a high computational cost. The naive computational complexity for the 2-Wasserstein distance between two $N \times N \times N$ functions using linear programming is $\mathcal{O}(N^9)$ operations. For rotational alignment, where the distance must be computed over a grid of rotations (typically $N^3$ rotations, to scale with the size of the input functions), the computational complexity increases to $\mathcal{O}(N^{12})$ operations. Although there exist fast algorithms for approximating the Wasserstein distance~\cite{CuturiSinkhorn2013}, especially on geometric domains~\cite{SolomonConvolutional2015}, they are still too slow for large-scale and high-dimensional 3-D alignments. This high computational cost drastically limits the application of the Wasserstein distance to rigid alignment tasks. Meanwhile, there exist efficient algorithms for computing the correlation over a grid of rotations in $\mathsf{SO}(3)$ with complexity $\mathcal{O}(N^4)$ operations~\cite{KovacsFast2002, KyatkinAlgorithms2000}. Therefore, this work attempts to unify the two frameworks of fast rotational correlations in $\mathsf{SO}(3)$ with optimal transport metrics. Our contributions are twofold:

\begin{enumerate}
    \item We detail an optimal transport-based alignment algorithm that performs a full grid optimization over $\mathsf{SO}(3)$ in $\mathcal{O}(N^4)$ operations. Interestingly, we show how selecting $d$ in~\cref{eq:rotational_distance} as the sliced 2-Wasserstein distance leads to a problem of aligning two helper quantities in the Euclidean distance. Many  alignment algorithms exist for the Euclidean distance and our approach therefore results in a modular and efficient framework where improved algorithms for Euclidean alignment can be leveraged to the setting of alignment in sliced 2-Wasserstein metrics.

    As an additional advantage, our alignment algorithm works directly on voxel-representations of the data, unlike several approaches that require an initial conversion to point-cloud representations.
    Lastly, the algorithm is deterministic and does not require sufficiently accurate initializations or sufficiently tuned hyperparameters, whereas many related algorithms (discussed later) do.    
    \item We provide two theorems about the stability of three-dimensional sliced Wasserstein metrics between a structure and a deformation of itself. These results can be used to inform alignment of two heterogeneous functions, since the distance of the aligned structures results in a lower bound on the degree of the heterogeneity through the stability theorems.
\end{enumerate}

This work builds on~\cite{ShiFast2026} which studied sliced Wasserstein-based alignment algorithms for two-dimensional functions. Compared to the 2-D case, the 3-D case poses additional challenges since the higher dimension adds a degree of freedom to the parametrization of the rotations. This means that we cannot simply apply a variation of the 2-D algorithm and care must be taken both theoretically and implementation-wise to obtain fast and accurate algorithms. We therefore phrase our results in terms of a more general and modular framework, as discussed in bullet point $1$ above. 
We show numerical results that highlight the advantage of our algorithm over traditional correlation-based Euclidean alignment methods and a selection of other optimal transport-based alignment algorithms. Our code is available at \url{[https://github.com/EricVerbeke/fast_volume_alignment_ot]}.

\section{Related works}
\label{sec:related_works}




Three-dimensional rigid registration is a classical problem in computer vision.
Existing methods can broadly be categorized according to the representation of the data and the optimization strategy. For example, two common approaches are point cloud registration algorithms~\cite{BeslMethod1992, BingJianRobust2011, KhooNonIterative2016}, and algorithms that use 3-D feature descriptors for alignment~\cite{HutchisonUnique2010}. Our approach focuses on using voxelized data directly. Voxel-based alignment methods are typically computed by evaluating rotational correlations using a spherical harmonic basis~\cite{KovacsFast2002, KyatkinAlgorithms2000}. While this approach is successful and widely used, it does not specifically handle deformations in the data. We note that there are methods that address finding correspondences between heterogeneous shapes~\cite{OvsjanikovFunctional2012}, but they typically assume some kind of global registration of the input data. Therefore, the main contribution of our work is to combine the fast voxel-based alignment methods with optimal transport metrics to provide a more robust rigid alignment framework for heterogeneous data.

Specifically, we utilize a sliced Wasserstein approach in this work. The sliced Wasserstein distance was originally introduced for 2-D applications in~\cite{RabinWasserstein2012} and has since been extended more generally in~\cite{KolouriGeneralized2019, KolouriGeneralized2022}. It has been used for many applications, particularly for 2-D and 3-D classification tasks~\cite{KolouriRadon2016, BeckmannGeneralizations2026}. We also note that there are already variants available for computing the sliced Wasserstein distance on the sphere. However, there are key differences between our work and existing methods. In our work, we define slices as regularly spaced lines passing though the origin with data projected orthogonally onto these lines. In contrast, other related works take slices to be 2-D planes passing through the unit sphere~\cite{QuellmalzSliced2023}, or consider the data to be projected onto great circles that are defined on the sphere~\cite{BonetSpherical2022} or into a Euclidean space using stereographic projections~\cite{TranStereographic2024}. Another related approach uses lines through the origin as slices, together with a Monte Carlo approach to compute the sliced Wasserstein distance. However, for our purpose of alignment, there is no advantage to random sampling compared to equispaced projections~\cite{ShiFast2026}.

A major application of 3-D alignment algorithms comes from the field of structural biology. In particular, cryo-EM relies heavily on alignment algorithms. Briefly, cryo-EM is a method used to obtain high-resolution 3-D reconstructions of biological molecules from large, high-dimensional and noisy 2-D tomographic projection images~\cite{BendorySingleParticle2020, SingerComputational2020}. Furthermore, cryo-EM experiments can contain extremely heterogeneous data; it can be both compositionally heterogeneous~\cite{KastritisCapturing2017, VerbekeClassification2018} (the data contains different underlying 3-D structures), and conformationally heterogeneous~\cite{ZhongCryoDRGN2021, GillesCryoEM2025} (the data contains the same 3-D structure but with possible deformations). Both forms of heterogeneity challenge conventional alignment algorithms. To address these specific challenges, many novel algorithms have been developed~\cite{PettersenUCSF2021, HarpazThreedimensional2023, HeAccurate2024}. Recently, several optimal transport based frameworks have also been introduced~\cite{SingerAlignment2024, RiahiAlignOT2023, TajmirRiahiAlignment2025}. Importantly, several key differences exist between our optimal transport based alignment algorithm and others; 1) we operate on the voxels directly, 2) we compute a full grid of rotations in optimal complexity and 3) we require no hyperparameters.

\section{Preliminaries}
This section details the definitions, conventions and observations used in the remainder of the paper, to make the exposition self-contained.
\subsection{Rotation conventions}\label{sec:rot_conv}
For any rotation $R \in \mathsf{SO}(3)$ and function $f: \mathbb{R}^3 \rightarrow \mathbb{R}$, we denote the action of $R$ on $f$ by $Rf:  \mathbb{R}^3 \rightarrow \mathbb{R}$, defined by
$$
(Rf)(x,y,z) = f(R^\top(x,y,z)).
$$

Let $\theta \in [0,\pi]$, $\varphi \in [0,2\pi]$, $\omega \in [0,2\pi]$ be three angles. We will use the ZYZ Euler angle representation of rotations, and therefore write any rotation in $\mathsf{SO}(3)$ as
$$
R_{\omega, \theta, \varphi} = R_\omega R_{\theta,\varphi} ,
$$
where $R_\omega$ is a rotation around the $z$-axis by the angle $\omega \in (0,2\pi]$, and $R_{\theta,\varphi}$ a rigid rotation mapping the vector on the unit sphere with angular coordinates $(\theta, \varphi) \in [0,\theta]\times (0,2\pi]$ to the positive $z$-axis. We can write $R_\omega = Z(\omega)$ and $R_{\theta, \varphi} := Y(-\theta)Z(-\varphi)$, where the two elemental rotations $Y$ and $Z$ are defined by
$$
Y(\theta) := \begin{bmatrix}
    \cos \theta & 0 & \sin \theta  \\
    0 & 1 & 0 \\
    -\sin \theta & 0 & \cos \theta
\end{bmatrix}, \quad Z(\varphi) := \begin{bmatrix}
    \cos \varphi & -\sin \varphi & 0 \\
    \sin \varphi & \cos \varphi & 0 \\
    0 & 0 & 1
\end{bmatrix} .
$$

We will in the following deal with rotations of this form, followed by projections onto the $z$-axis, which are invariant to the action of the $R_\omega$ (see Observation~\ref{obs:inv_omega} below for a precise statement).

We will, by a slight abuse of notation, write expressions like $R^\top(\theta, \varphi)$ to be the point on the unit sphere obtained by rotating the point with spherical angles $\theta, \varphi$. 

\subsection{The Radon Transform}
This section defines the Radon transform and recalls a number of its properties which will be useful in the remainder of the article. In three-dimensional space, the Radon transform is parametrized by the rotation angles $(\theta, \varphi)$. It is computed by projecting the plane orthogonal to the viewing direction onto the viewing direction axis. In more detail, for a given function $f: \mathbb{R}^3 \rightarrow \mathbb{C}$, we denote the result of applying the Radon transform to $f$ by $P_{\theta, \varphi}f : \mathbb{R} \rightarrow \mathbb{C}$, defined by
\begin{equation}
\label{eq:proj_operator}
\left(P_{\theta,\varphi} f\right)(z) \coloneq  \iint_{\mathbb{R}^2} (R_{\theta,\varphi} f)(x,y,z) dx dy  = \iint_{\mathbb{R}^2} f(R_{\theta,\varphi}^\top (x,y,z) ) dx dy .
\end{equation}

By allowing the spherical angles $\theta, \varphi$ to vary, we will (when convenient) alternatively view $P_{\theta, \varphi}f(z)$ as a function defined on $\mathbb{R}^3$ written in terms of the spherical coordinates $(\theta, \varphi, z) \in [0,\theta]\times (0, 2\pi] \times \mathbb{R}$. For convenience, we allow $z<0$ in what follows and note that the point with spherical coordinates $(\theta, \varphi, -\abs{z})$ can also be written with the spherical coordinates $(\pi-\theta, \pi+\varphi,\abs{z})$.

\begin{observation}\label{obs:nufft}
By the Fourier slice theorem, the projections can be evaluated using Fourier transforms
\begin{equation}
\label{eq:fourier_slice}
P_{\theta,\varphi}f(z) = \mathcal{F}_1^{-1}\left ((S_{\theta, \varphi}\mathcal{F}_3{f})(\xi) \right)= \int_\mathbb{R} (\mathcal{F}_3f)(R_{\theta,\varphi}^\top(0,0,\xi)^\top) e^{2\pi i \xi z}d\xi,
\end{equation}
where $\mathcal{F}_d$ denotes the $d$-dimensional Fourier transform with the convention
$$
\mathcal{F}_d(f)(\xi) := \int_{\mathbb{R}^d} f(x) e^{-2\pi i \xi \cdot x} dx, \quad  \xi\in \mathbb{R}^d, \qquad \mathcal{F}^{-1}_d(f)(x) := \int_{\mathbb{R}^d} f(\xi) e^{2\pi i \xi \cdot x} d\xi, \quad  x\in \mathbb{R}^d,
$$
and for any $g:\mathbb{R}^3 \rightarrow \mathbb{C}$, we define the function $\mathcal{S}_{\theta,\varphi}(g): \mathbb{R} \rightarrow \mathbb{C} $ by
$$
(\mathcal{S}_{\theta,\varphi} g)(k) = g(k\sin\theta\cos\varphi, k\sin\theta\sin\varphi,k\cos\theta)
$$
as the one-dimensional function obtained by restriction to given spherical angles $(\theta, \varphi)$.
\end{observation}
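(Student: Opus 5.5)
The plan is to prove the identity in Observation~\ref{obs:nufft} by computing the one-dimensional Fourier transform of $P_{\theta,\varphi}f$ directly and recognizing it as a restriction of $\mathcal{F}_3 f$ to a line; Fourier inversion then gives the claim. Throughout, $f$ is assumed regular enough (e.g.\ $f\in L^1(\mathbb{R}^3)$ with $\mathcal{F}_3 f$ integrable along lines, which holds for compactly supported densities of the kind considered here) that Fubini's theorem and one-dimensional Fourier inversion apply.

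\emph{Step 1.} Start from the definition \eqref{eq:proj_operator} and compute, for $\xi\in\mathbb{R}$,
\begin{equation*}
\mathcal{F}_1(P_{\theta,\varphi}f)(\xi)=\int_{\mathbb{R}}\iint_{\mathbb{R}^2} f\bigl(R_{\theta,\varphi}^\top(x,y,z)\bigr)e^{-2\pi i \xi z}\,dx\,dy\,dz .
\end{equation*}
By Fubini this equals $\int_{\mathbb{R}^3} f(R_{\theta,\varphi}^\top u)\,e^{-2\pi i (0,0,\xi)\cdot u}\,du$.

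\emph{Step 2.} Substitute $w=R_{\theta,\varphi}^\top u$. Since $R_{\theta,\varphi}$ is orthogonal, the Jacobian is $1$ and $u=R_{\theta,\varphi}w$. Orthogonality also gives
\begin{equation*}
(0,0,\xi)\cdot R_{\theta,\varphi}w=\bigl(R_{\theta,\varphi}^\top(0,0,\xi)^\top\bigr)\cdot w .
\end{equation*}
Hence
\begin{equation*}
\mathcal{F}_1(P_{\theta,\varphi}f)(\xi)=(\mathcal{F}_3 f)\bigl(R_{\theta,\varphi}^\top(0,0,\xi)^\top\bigr).
\end{equation*}

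\emph{Step 3.} Identify the point $R_{\theta,\varphi}^\top(0,0,\xi)^\top$. By definition $R_{\theta,\varphi}=Y(-\theta)Z(-\varphi)$ maps the unit vector with angles $(\theta,\varphi)$ to $e_z$, so $R_{\theta,\varphi}^\top e_z=Z(\varphi)Y(\theta)e_z$. A direct multiplication gives $Y(\theta)e_z=(\sin\theta,0,\cos\theta)$, and applying $Z(\varphi)$ yields $(\sin\theta\cos\varphi,\sin\theta\sin\varphi,\cos\theta)$. By linearity,
\begin{equation*}
R_{\theta,\varphi}^\top(0,0,\xi)^\top=\xi(\sin\theta\cos\varphi,\sin\theta\sin\varphi,\cos\theta),
\end{equation*}
so $\mathcal{F}_1(P_{\theta,\varphi}f)(\xi)=(\mathcal{S}_{\theta,\varphi}\mathcal{F}_3 f)(\xi)$.

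\emph{Step 4.} Apply $\mathcal{F}_1^{-1}$ to both sides. One-dimensional Fourier inversion, valid under the stated integrability, yields \eqref{eq:fourier_slice}, and writing out the inverse transform gives the integral form.

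The only delicate point is Step 3. One must check that the convention $R_{\theta,\varphi}^\top$ in \eqref{eq:proj_operator} matches the direction used in $\mathcal{S}_{\theta,\varphi}$, i.e.\ that the transpose, and not $R_{\theta,\varphi}$ itself, sends $e_z$ to the direction with angles $(\theta,\varphi)$. A sign slip in the elemental rotations would produce the wrong line, so I would confirm this by explicitly multiplying $Z(\varphi)Y(\theta)e_z$ as above. The analytic justifications (Fubini and inversion) are routine for $f\in L^1$ with compact support.
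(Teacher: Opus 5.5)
Your argument is correct and is the standard derivation of the Fourier slice theorem, which the paper invokes without proof; in particular, your Step 3 computation $R_{\theta,\varphi}^\top e_z = Z(\varphi)Y(\theta)e_z = (\sin\theta\cos\varphi,\sin\theta\sin\varphi,\cos\theta)$ is right and matches the direction $u_{\theta,\varphi}$ the paper uses in its appendix. One correction to your regularity remark: compact support alone does not make $\mathcal{F}_3 f$ integrable along lines (for the normalized indicator of a small cube, i.e.\ the kind of binary occupancy data used in the ModelNet10 experiments, the projection onto the $z$-axis has jumps and its transform decays only like $1/\abs{\xi}$), so either keep that integrability as an explicit hypothesis or read the final inverse transform in a weaker (e.g.\ $L^2$ or summability) sense.
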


The next observation explains why our definition of $P_{\theta,\varphi}$ does not include the additional angle $\omega$. 
\begin{observation}\label{obs:inv_omega}
    For any $\omega \in (0,2\pi]$ and $f:\mathbb{R}^3 \rightarrow \mathbb{R}$
$$
\iint_{\mathbb{R}^2} (R_\omega R_{\theta,\varphi} f)(x,y,z) dx dy  = \iint_{\mathbb{R}^2} (R_{\theta,\varphi} f)(x,y,z)  dx dy .
$$
\end{observation}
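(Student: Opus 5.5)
The plan is to unfold the definition of the rotation action and then change variables in the plane integral. By the convention of Section~\ref{sec:rot_conv}, $(R_\omega R_{\theta,\varphi} f)(x,y,z) = (R_{\theta,\varphi}f)(R_\omega^\top(x,y,z))$. Since $R_\omega = Z(\omega)$ is a rotation about the $z$-axis, its transpose $Z(\omega)^\top = Z(-\omega)$ acts only on the first two coordinates and leaves $z$ fixed. Concretely,
\begin{equation*}
R_\omega^\top(x,y,z) = (x\cos\omega + y\sin\omega,\; -x\sin\omega + y\cos\omega,\; z).
\end{equation*}

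Writing $h := R_{\theta,\varphi} f$, the left-hand side becomes $\iint_{\mathbb{R}^2} h(Q(x,y), z)\,dx\,dy$, where $Q = \begin{bmatrix}\cos\omega & \sin\omega\\ -\sin\omega & \cos\omega\end{bmatrix}$ is a $2\times 2$ rotation matrix. For each fixed $z$, substitute $(u,v) = Q(x,y)$. This linear change of variables is a bijection of $\mathbb{R}^2$ with $|\det Q| = 1$, so $dx\,dy = du\,dv$. The integral therefore equals $\iint_{\mathbb{R}^2} h(u,v,z)\,du\,dv$, which is the right-hand side.

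The only point requiring care is justifying the change of variables, namely that the integral exists. This holds whenever $h(\cdot,\cdot,z)$ is integrable on $\mathbb{R}^2$, which is the standing assumption under which $P_{\theta,\varphi}f$ is defined. For the densities considered here, which are bounded and compactly supported in $\mathbb{B}$, integrability is automatic. I would also note that the identity holds pointwise in $z$ and for every $\omega$, so no measure-zero exceptions arise. There is no real obstacle: the argument is rotation invariance of Lebesgue measure on the $(x,y)$-plane.
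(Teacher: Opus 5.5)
Your proposal is correct and takes the same approach as the paper's proof. Both unfold $(R_\omega h)(x,y,z) = h(R_\omega^\top(x,y,z))$ and apply the change of variables $(x',y',z) = R_\omega^\top(x,y,z)$, which leaves $z$ fixed and has unit Jacobian determinant. Your explicit $2\times 2$ matrix $Q$ and your remark on integrability make the argument a little more careful than the paper's one-line version, without changing it.
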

\begin{proof}
    We have
$$
\iint_{\mathbb{R}^2} (R_\omega R_{\theta,\varphi} f)(x,y,z) dx dy = \iint_{\mathbb{R}^2} (R_{\theta, \varphi} f)(R_\omega^\top (x,y,z) ) dx dy = \iint_{\mathbb{R}^2} (R_{\theta, \varphi} f)(x',y',z) dx' dy',
$$
after performing the change of variables $(x',y',z) = R_{\omega}^\top(x,y,z)$ which has a Jacobian with determinant equal to $1$.
\end{proof}

Any rotation $R \in SO(3)$ defines an action on $P_{\theta, \varphi}f$ by rotating the spherical angles $(\theta , \varphi)$, i.e., by
$$
(RP_{\theta, \varphi}f)(z) = (P_{R^T(\theta, \varphi)}f)(z).
$$
Explicitly, we can describe this action with the following observation.

\begin{observation}\label{obs:3}
For any rotation $R \in SO(3)$,
    $(RP_{\theta, \varphi}f)(z) = P_{\theta, \varphi}(Rf)(z)$.
\end{observation}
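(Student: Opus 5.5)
The plan is to unfold both sides using the definition~\eqref{eq:proj_operator}, identify the direction $\xi = R_{\theta,\varphi}^\top e_3$ that $P_{\theta,\varphi}$ projects onto, and show the two sides are one-dimensional marginals of the same function along the same line. First, I will rewrite the projection intrinsically. Setting $u = R_{\theta,\varphi}^\top(x,y,z)$, a change of variables with unit Jacobian gives
$$
(P_{\theta,\varphi} f)(z) = \int_{\{u\in\mathbb{R}^3:\, u\cdot \xi = z\}} f(u)\, dA(u), \qquad \xi := R_{\theta,\varphi}^\top e_3 ,
$$
where $dA$ is surface measure on the plane. The plane is $\{u\cdot\xi = z\}$ because $u\cdot \xi = R_{\theta,\varphi}^\top(x,y,z)\cdot R_{\theta,\varphi}^\top e_3 = z$. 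Thus $P_{\theta,\varphi}f(z)$ depends on $(\theta,\varphi)$ only through the unit vector $\xi$, which is the point on the sphere with angular coordinates $(\theta,\varphi)$. This is consistent with Observation~\ref{obs:inv_omega}: any rotation $R'$ with $R'^\top e_3 = \xi$ yields the same integral.

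Second, I will compute the right-hand side. By definition $(Rf)(u) = f(R^\top u)$, so
$$
P_{\theta,\varphi}(Rf)(z) = \int_{\{u\cdot\xi = z\}} f(R^\top u)\, dA(u).
$$
Substituting $v = R^\top u$ is measure preserving and maps the plane $\{u\cdot\xi=z\}$ onto $\{v\cdot R^\top\xi = z\}$, since $u\cdot\xi = Rv\cdot\xi = v\cdot R^\top \xi$. Hence
$$
P_{\theta,\varphi}(Rf)(z) = \int_{\{v\cdot R^\top\xi = z\}} f(v)\, dA(v),
$$
which is the projection of $f$ along the direction $R^\top\xi$. By the first step this equals $(P_{R^\top(\theta,\varphi)}f)(z)$. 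In that expression $R^\top(\theta,\varphi)$ is read, following the paper's stated abuse of notation, as the spherical angles of $R^\top\xi$. By the defining formula for the action of $R$ on $P_{\theta,\varphi}f$, this is precisely $(RP_{\theta,\varphi}f)(z)$.

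The only delicate point is convention bookkeeping. We need $R^\top$ rather than $R$ in the rotated angles, consistent with $(Rf)(x)=f(R^\top x)$, and the result must be independent of which rotation $R_{\theta,\varphi}$ one uses to send the direction to $e_3$. The intrinsic plane-integral formula handles both issues. Alternatively, one could write $R_{\theta',\varphi'} = R_\omega R_{\theta,\varphi}R^\top$ for suitable $\omega$ and invoke Observation~\ref{obs:inv_omega} directly, but the intrinsic formulation avoids tracking $\omega$ explicitly.
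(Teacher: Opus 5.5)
Your proof is correct, but it takes a different route from the paper. Both arguments end by showing that each side equals the projection of $f$ along $R^\top R_{\theta,\varphi}^\top e_z$.

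The paper stays in the coordinates of~\eqref{eq:proj_operator}. It factors the composite rotation $R_{\theta,\varphi}R$ in ZYZ Euler angles as $R_uR_{v,w}$ and discards the $z$-rotation $R_u$ using Observation~\ref{obs:inv_omega}. It then identifies $(v,w)$ with the angles of $R^\top R_{\theta,\varphi}^\top e_z$ via $R_{v,w}^\top e_z=R_{u,v,w}^\top e_z$.

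You instead write $P_{\theta,\varphi}f(z)$ as an integral over the plane $\{u\cdot\xi=z\}$. This shows directly that the projection depends only on the direction $\xi$, so Observation~\ref{obs:inv_omega} becomes a by-product rather than an input. One rigid change of variables, $v=R^\top u$, then finishes the proof.

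Your version needs no Euler-angle factorization and makes the $R$ versus $R^\top$ convention transparent. It also shows that $P_{R^\top(\theta,\varphi)}f$ is well defined at the poles, where $\varphi$ is degenerate. It matches the plane-integral viewpoint the paper itself uses later in the proof of Theorem~\ref{thm:deformation_stability}. The paper's version has the advantage of reusing the Euler-angle bookkeeping in which the alignment algorithm is phrased.

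Your closing aside is essentially the paper's argument, but check its orientation. $P_{\theta,\varphi}(Rf)$ integrates $(R_{\theta,\varphi}R)f$, so applying Observation~\ref{obs:inv_omega} directly requires $R_{\theta,\varphi}R=R_\omega R_{\theta',\varphi'}$. The relation $R_{\theta',\varphi'}=R_\omega R_{\theta,\varphi}R^\top$ as you wrote it instead yields $P_{\theta',\varphi'}(Rf)=P_{\theta,\varphi}f$. That is still a valid form of the statement, with the primed and unprimed angles interchanged.
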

\begin{proof}
For any $(\gamma, \alpha, \beta) \in (0,2\pi]\times [0,\pi]\times (0,2\pi]$, we have $R_{\gamma, \alpha, \beta}P_{\theta, \varphi}f(z) = P_{s,t}f$, where $(s,t,z)$ are the spherical coordinates of the image under $R_{\gamma, \alpha, \beta}^\top$ of the point with spherical coordinates $(\theta , \varphi,z)$, i.e., the spherical coordinates of $R_{\gamma, \alpha, \beta}^\top R_{\theta,\varphi}^\top e_z$. On the other hand, we have
\begin{align}
\left(P_{\theta,\varphi} (R_{\gamma, \alpha, \beta}f)\right)(z) & =  \iint_{\mathbb{R}^2} (R_{\theta,\varphi}R_{\gamma, \alpha, \beta} f)(x,y,z) dx dy 
\end{align}

By decomposing the rotation $R_{\theta,\varphi}R_{\gamma, \alpha, \beta}$ into Euler angles in the $ZYZ$ convention, we can write
$$
R_{\theta,\varphi}R_{\gamma, \alpha, \beta} = R_{u,v,w},
$$
for some Euler angles $u,v,w$. By Observation~\ref{obs:inv_omega}, this implies that
\begin{align}\label{eq:yzy2zyz}
\left(P_{\theta,\varphi} (R_{\gamma,\alpha, \beta}f)\right)(z) = \left(P_{v,w} f\right)(z),
\end{align}
and we now claim that $(v,w) = (s,t)$. To see this, note that the definition of $R_{v,w}$ in Section~\ref{sec:rot_conv} implies that $R_{v,w} (v,w) = e_z,$ so $$(v,w) = R_{v,w}^\top e_z = R_{v,w}^\top R_u^\top e_z = R_{u,v,w}^\top e_z = (R_{\theta,\varphi}R_{\gamma, \alpha, \beta})^\top e_z = R_{\gamma, \alpha, \beta}^\top R_{\theta,\varphi}^\top e_z = (s,t).$$


\end{proof}

\section{Algorithm}
This section presents the main result of the paper, i.e., an algorithm for aligning volumes using the sliced Wasserstein distance. For clarity, we separate the exposition into the following two subsections. The first section details the computations involved in computing and aligning in the sliced Wasserstein distance for continuous functions. The second section introduces the approximations involved when performing these computations on discrete data.
\subsection{Continuous algorithm}
The algorithm for alignment of continuous functions can be divided into three steps, presented below.
\label{sec:cont} 



\subsubsection{Step 1: computing Radon transforms}
Let $f,g : \mathbb{R}^3 \to \mathbb{R}^+$ be two probability measures which are normalized such that $\int_{\mathbb{R}^3} f(x,y,z) dx dy dz = \int_{\mathbb{R}^3} g(x, y, z) dx dy dz = 1$. We compute the Radon transforms of $f$ and $g$, parametrized by a rotation axis $(\theta, \varphi)$. For each axis $(\theta,\varphi) \in \times [0,\pi]\times [0,2\pi]$, we denote these one-dimensional projections by $\mu_{\theta,\varphi},\nu_{\theta,\varphi}:  \mathbb{R}  \rightarrow \mathbb{R}^+$. They are defined by
$$
\mu_{\theta,\varphi}(x) \coloneq \left(P_{\theta,\varphi} f\right)(x) , \qquad \text{ and } \qquad \nu_{\theta,\varphi}(x) \coloneq \left(P_{\theta,\varphi} g\right)(x). 
$$

\subsubsection{Step 2: computing Wasserstein distances}
The continuous sliced $2$-Wasserstein distance between $f$ and $g$ is the mean squared 1-D 2-Wasserstein distance over all projection angles
\begin{equation}
\label{eq:sliced_wasserstein}
d_{SW_2}^2 (f,g) = \frac{1}{4\pi} \int_0^{\pi}\int_0^{2\pi} d_{W_2}^2(\mu_{\theta,\varphi}, \nu_{\theta,\varphi}) \sin \theta d\varphi d\theta .
\end{equation}
Here, the squared 1-D 2-Wasserstein distance between two projections $\mu$ and $\nu$ is defined by
\begin{equation}
\label{eq:1d_wasserstein}
d_{W_2}^2 (\mu , \nu) = \int_0^1 \left| \left(\volterra \mu\right)^{-1}(t) - \left(\volterra \nu\right)^{-1}(t) \right|^2 dt ,
\end{equation}
where $(\volterra \mu)(s) = \int_{-\infty}^s \mu(x) dx$ denotes the cumulative distribution function and $(\volterra \mu)^{-1}(t) = \inf \{s : (\volterra \mu)(s) > t\}$ is the associated quantile function (or functional inverse of the cumulative distribution function).

The integration performed in the Radon transform typically smooths out differences between functions. One common approach in tomography to counteract this smoothing is to include a ramp-filter in Fourier space~\cite{Kak2001Principles}. We therefore additionally define a ramp-filtered version of the 3-D sliced Wasserstein distance (referred to as ramp-filtered sliced Wasserstein distance), as introduced in~\cite{ShiFast2026}:

\begin{equation}
\label{eq:ramp_filter}
\mu^s_{\theta,\varphi}(x) = h * (P_{\theta,\varphi})f(x) = \mathcal{F}_1^{-1} 
(|\xi|^2 (\mathcal{F}_3 f)(R_{\theta,\varphi}^\top \left( 0, 0, \xi)^\top \right),
\end{equation}
where $h$ is the ramp filter in the spatial domain, whose Fourier transform is $\mathcal{F}_1(h)(\xi) = |\xi|^2$. As previously described in~\cite{ShiFast2026}, applying the ramp filter does not enforce positivity of the line projections, which is required for probability measures. We therefore split each ramp-filtered projection into its positive and negative parts

\[
\mu^s_{\theta,\varphi} = \mu_{\theta,\varphi}^+ +  \mu_{\theta,\varphi}^-, \qquad \mu_{\theta,\varphi}^+ = \max(\mu_{\theta,\varphi},0), \quad \mu_{\theta,\varphi}^- = \min(\mu_{\theta,\varphi},0)
\]
 As noted in~\cite{ShiFast2026}, although the line projections are all mean-zero, their norms can be different. Specifically
 \[
 \int \mu_{\theta,\varphi}^+(x) + \mu_{\theta,\varphi}^-(x) dx = \int \nu_{\theta,\varphi}^+(x) + \nu_{\theta,\varphi}^-(x) dx = 0,
 \]
but it is not necessarily true that the norms of the signed projections are equal. Therefore, to follow the conditions required for the 1-D Wasserstein distance, the signed projections are additionally normalized to $1$, as in:
$$
\tilde{\mu}_{\theta,\varphi}^+ := \frac{\mu_{\theta,\varphi}^+}{\|\mu_{\theta,\varphi}^+\|_1}, \qquad \tilde{\mu}_{\theta,\varphi}^- := \frac{\mu_{\theta,\varphi}^-}{\|\mu_{\theta,\varphi}^-\|_1} .
$$
We then consider the 3-D ramp-filtered sliced 2-Wasserstein distance to be

\begin{equation}
\label{eq:ramp_filter_sliced_wasserstein}
d_{RFSW_2}^2 (f,g) = \frac{1}{4\pi} \int_0^{\pi}\int_0^{2\pi} d_{W_2}^2(\tilde{\mu}_{\theta,\varphi}^+ , \tilde{\nu}_{\theta,\varphi}^+) \sin \theta d\varphi d\theta + \frac{1}{4\pi} \int_0^{\pi}\int_0^{2\pi} d_{W_2}^2(\tilde{\mu}_{\theta,\varphi}^- , \tilde{\nu}_{\theta,\varphi}^-) \sin \theta d\varphi d\theta .
\end{equation}
Even with the additional normalization of the filtered projections,~\cref{eq:ramp_filter_sliced_wasserstein} provides a good measurement of the difference in the sense that $d_{RFSW_2}^2 (f,g) = 0$ if and only if $f = g$ almost everywhere.

\subsubsection{Step 3: computing alignment between inverse cumulative distribution functions}
In the last step, the distance is minimized over rotations $R \in \mathsf{SO}(3)$. In more detail, we choose the alignment angles $(\alpha_*,\beta_*,\gamma_*) \in [0,\pi]\times [0,2\pi]^2$ by minimizing the sliced Wasserstein distance between $f$ and $R_{\theta, \varphi}g$, i.e., by
\begin{equation}\label{eq:def_optimal_euler_angles}
(\alpha_*,\beta_*,\gamma_*) = \argmin_{(\alpha,\beta,\gamma) \in [0,\pi]\times [0,2\pi]^2} d_{SW_2}(f,R_{\alpha,\beta,\gamma}g).
\end{equation}

Our alignment algorithm hinges on the following result.

\begin{proposition}\label{prop:align}
    For any $(\alpha,\beta,\gamma) \in [0,\pi]\times [0,2\pi]^2$ and functions $f,g: \mathbb{R}^3 \rightarrow \mathbb{R}^+$ with $\int_{\mathbb{R}^3} f dx = 1 = \int_{\mathbb{R}^3} g dx$, we have
    \begin{equation}\label{eq:def_dsw2_theorem}
    d_{SW_2}(f,R_{\alpha,\beta,\gamma}g) = d_E(h_f, R_{\alpha,\beta,\gamma}h_g), 
\end{equation}
where $d_E(\cdot, \cdot)$ denotes the Euclidean distance and $$h_f(t,\theta,\varphi) = \tfrac{1}{t} \left(\volterra P_{\theta,\varphi} f\right)^{-1}(t), \qquad h_g(t,\theta,\varphi) = \tfrac{1}{t} \left(\volterra P_{\theta,\varphi} g\right)^{-1}(t) .$$
Similarly for the ramp-filtered sliced 2-Wasserstein distance
\begin{equation}\label{eq:ramp_filtered_alignment_2_terms}
    d_{RFSW_2}(f,R_{\alpha,\beta,\gamma}g)^2 = d_E(h^+_f, R_{\alpha,\beta,\gamma}h^+_g)^2 + d_E(h^-_f, R_{\alpha,\beta,\gamma}h^-_g)^2,
\end{equation}
where 
$$
h_f^\pm(t,\theta,\varphi) = \tfrac{1}{t} \left(\volterra \tilde{\mu}^\pm_{\theta,\varphi}\right)^{-1}(t), \qquad h_g^\pm(t,\theta,\varphi) = \tfrac{1}{t} \left(\volterra \tilde{\nu}^\pm_{\theta,\varphi}\right)^{-1}(t) .
$$

\end{proposition}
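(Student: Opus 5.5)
The plan is to read $(t,\theta,\varphi)$ in the definition of $h_f$ as spherical coordinates on the unit ball $\mathbb{B}$. Here $t\in[0,1]$ is the radius and $(\theta,\varphi)$ are the angles, so $h_f,h_g$ become functions on $\mathbb{B}$. The factor $\tfrac1t$ is then exactly what cancels the radial part of the volume element $t^2\sin\theta\,dt\,d\theta\,d\varphi$. The proof then has two ingredients: an identity turning the sliced distance into an $L^2(\mathbb{B})$ distance between $h_f$ and $h_g$, and an equivariance statement $R h_g = h_{Rg}$ that moves the rotation from $g$ to $h_g$. I take $d_E$ to be the $L^2(\mathbb{B})$ distance carrying the same normalizing constant $\tfrac{1}{4\pi}$ as \eqref{eq:sliced_wasserstein}. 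If $d_E$ is instead the unnormalized $L^2$ norm, the identity holds up to that fixed constant, which does not affect the minimizer in \eqref{eq:def_optimal_euler_angles}.

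\emph{Step 1 (unrotated identity).} For any two admissible $f,g$, write out $d_E(h_f,h_g)^2$ in spherical coordinates:
\begin{align*}
d_E(h_f,h_g)^2 &= \frac{1}{4\pi}\int_0^\pi\!\int_0^{2\pi}\!\int_0^1 \Bigl|\tfrac1t(\volterra\mu_{\theta,\varphi})^{-1}(t)-\tfrac1t(\volterra\nu_{\theta,\varphi})^{-1}(t)\Bigr|^2 t^2\sin\theta\,dt\,d\varphi\,d\theta \\
&= \frac{1}{4\pi}\int_0^\pi\!\int_0^{2\pi} d_{W_2}^2(\mu_{\theta,\varphi},\nu_{\theta,\varphi})\sin\theta\,d\varphi\,d\theta .
\end{align*}
The second line uses \eqref{eq:1d_wasserstein}, and the result is $d_{SW_2}^2(f,g)$ by \eqref{eq:sliced_wasserstein}. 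Each $\mu_{\theta,\varphi}$ is a probability density, since it is nonnegative with total mass $\int f = 1$, so its quantile function is well defined on $(0,1)$ and the 1-D formula applies.

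\emph{Step 2 (equivariance).} The action of $R$ on $h_g$, viewed as a function on $\mathbb{B}$, fixes the radius $t$ and rotates the angular coordinates: $(Rh_g)(t,\theta,\varphi)=h_g(t,R^\top(\theta,\varphi))$. This is the quantile function of $P_{R^\top(\theta,\varphi)}g = (RP_{\theta,\varphi}g)$. By Observation~\ref{obs:3}, that equals $P_{\theta,\varphi}(Rg)$. Applying $\volterra$ and then the inverse pointwise in the angles gives $Rh_g=h_{Rg}$. Since $Rg$ is again a probability density, applying Step 1 to the pair $(f,Rg)$ yields \eqref{eq:def_dsw2_theorem}.

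\emph{Step 3 (ramp-filtered case).} The same two steps should go through. The ramp filter $h$ acts as a 1-D convolution in the variable $z$ along each fixed direction, so it commutes with relabelling directions. Taking positive and negative parts and normalizing by $\|\cdot\|_1$ are also pointwise in $(\theta,\varphi)$. Hence $\tilde\mu^\pm$ computed for $Rg$ at direction $(\theta,\varphi)$ equals $\tilde\nu^\pm$ at direction $R^\top(\theta,\varphi)$, i.e.\ $Rh_g^\pm=h_{Rg}^\pm$. Applying Step 1 separately to the $+$ and $-$ integrals in \eqref{eq:ramp_filter_sliced_wasserstein} then gives \eqref{eq:ramp_filtered_alignment_2_terms}.

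I expect the main obstacle to be making Step 2 rigorous with respect to coordinate conventions. One must check that "rotating $h_g$ as a function on $\mathbb{B}$" with $t$ held fixed really matches the action on $P_{\theta,\varphi}$ defined before Observation~\ref{obs:3}. One must also handle the antipodal convention $(\theta,\varphi,-|z|)\sim(\pi-\theta,\pi+\varphi,|z|)$: the quantile argument $t$ is always nonnegative, so $t$ must be treated as a genuine radius and not as a signed coordinate. A further subtlety is that $\|\tilde\mu^-\|$ is only defined where $\mu^-\neq0$, and $\tilde\mu^-$ is nonpositive, so the sign convention for $\tilde\mu^-$ has to be fixed before its quantile function makes sense. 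Both issues affect only a null set of directions and do not change the integrals.
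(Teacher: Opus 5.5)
Your proposal is correct and is essentially the paper's argument. The paper likewise combines Observation~\ref{obs:3} with the fact that the rotation acts only on the angular variables, so it commutes with $\volterra$ and its inverse (\cref{eq:rot_icdf}); it then multiplies and divides by $t$ so that $t^2\sin\theta$ becomes the volume element on $\mathbb{B}$, and it treats the ramp-filtered case by noting that the filter and the $\pm$ splitting commute with the rotation. You merely factor the paper's single chain of equalities into an unrotated identity plus the equivariance $Rh_g=h_{Rg}$, which also makes explicit two things the paper leaves implicit: the $\tfrac{1}{4\pi}$ normalization in $d_E$, and the sign convention needed for the quantile function of $\tilde\mu^-$.
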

\begin{proof}
The proof relies on commuting a number of the introduced operators. We first note that
\begin{align}
\left(\volterra R_{\alpha,\beta,\gamma}P_{\theta,\varphi} g\right)(t) = \int_{-\infty}^t (R_{\alpha,\beta,\gamma}P_{\theta,\varphi} g)(z) dz = R_{\alpha,\beta,\gamma}\int_{-\infty}^t (P_{\theta,\varphi} g)(z) dz = \left(R_{\alpha,\beta,\gamma}\volterra P_{\theta,\varphi} g\right)(t).
\end{align}
It then also follows that
\begin{align}\label{eq:rot_icdf}
\left(\volterra R_{\alpha,\beta,\gamma}P_{\theta,\varphi} g\right)^{-1}(t) = \left(R_{\alpha,\beta,\gamma}\volterra P_{\theta,\varphi} g\right)^{-1}(t) =R_{\alpha,\beta,\gamma}\left(\volterra P_{\theta,\varphi} g\right)^{-1}(t).
\end{align}

We conclude by making the following chain of observations
\begin{align*}
d_{SW_2}(f,R_{\alpha,\beta,\gamma}g)^2  &= \frac{1}{4\pi} \int_0^{\pi}\int_0^{2\pi} d_{W_2}^2(P_{\theta,\varphi} f, P_{\theta,\varphi} R_{\alpha,\beta,\gamma}g) \sin \theta d\varphi d\theta && \text{ (by~\cref{eq:sliced_wasserstein})}\\
&= \frac{1}{4\pi} \int_0^{\pi}\int_0^{2\pi} d_{W_2}^2(P_{\theta,\varphi} f, R_{\alpha,\beta,\gamma}P_{\theta,\varphi} g) \sin \theta d\varphi d\theta && \text{ (by Obs.~\ref{obs:3})}\\
&= \frac{1}{4\pi} \int_0^{\pi}\int_0^{2\pi} \int_0^1 \left| \left(\volterra P_{\theta,\varphi} f\right)^{-1}(t) - \left(\volterra R_{\alpha,\beta,\gamma} P_{\theta,\varphi} g\right)^{-1}(t) \right|^2 dt\sin \theta d\varphi d\theta && \text{ (by~\cref{eq:1d_wasserstein})}\\
&= \frac{1}{4\pi} \int_0^{\pi}\int_0^{2\pi} \int_0^1 \left| \left(\volterra P_{\theta,\varphi} f\right)^{-1}(t) - \left(R_{\alpha,\beta,\gamma} \volterra P_{\theta,\varphi} g\right)^{-1}(t) \right|^2 dt\sin \theta d\varphi d\theta
&& \text{ (by~\cref{eq:rot_icdf})}\\
&= \frac{1}{4\pi} \int_0^{\pi}\int_0^{2\pi} \int_0^1 \left| \left(\volterra P_{\theta,\varphi} f\right)^{-1}(t) - R_{\alpha,\beta,\gamma}\left(\volterra P_{\theta,\varphi} g\right)^{-1}(t) \right|^2 dt\sin \theta d\varphi d\theta && \text{ (by~\cref{eq:rot_icdf})}\\
&= \frac{1}{4\pi} \int_0^{\pi}\int_0^{2\pi} \int_{0}^1 \left|\tfrac{1}{t} \left(\volterra P_{\theta,\varphi} f\right)^{-1}(t) - \tfrac{1}{t} R_{\alpha,\beta,\gamma}\left(\volterra P_{\theta,\varphi} g\right)^{-1}(t) \right|^2 t^2 \sin \theta dtd\varphi d\theta \\
&= d_E(h_f, R_{\alpha,\beta,\gamma}h_g)^2.
\end{align*}

For the ramp-filtered sliced Wasserstein distance, we have
$$
\volterra \left(h*(P_{\theta,\varphi}R_{\alpha,\beta,\gamma} f)\right)^\pm = \volterra \left(h*(R_{\alpha,\beta,\gamma} P_{\theta,\varphi}f)\right)^\pm
= \volterra R_{\alpha,\beta,\gamma}\left(h*( P_{\theta,\varphi}f)\right)^\pm = R_{\alpha,\beta,\gamma} \volterra \left(h*( P_{\theta,\varphi}f)\right)^\pm,
$$
where the first equality is Observation~\ref{obs:3}, and the second comes from the fact that $h$ is a radial function, so $R_{\alpha,\beta,\gamma}$ commutes with this radial convolution. The action of $R_{\alpha,\beta,\gamma}$ also commutes with extracting positive and negative parts, and the third equality is~\cref{eq:rot_icdf}. An analogous argument to the above chain of equalities then concludes the proof.
\end{proof}
In other words, we can compute the rotation minimizing the distance between $f$ and $g$ in the sliced 2-Wasserstein distance, by minimizing the Euclidean distance of the two helper functions $h_f$ and $h_g$. This can be done using any existing 3-D Euclidean alignment algorithm.
Similarly, for the ramp-filtered sliced 2-Wasserstein distance, we minimize the Euclidean distance between the two vector-valued helper functions \begin{equation} \label{eq:helper_vector_valued}
\begin{bmatrix}
    h_f^+ \\ h_f^-
\end{bmatrix} \quad \text{ and } \quad \begin{bmatrix}
    h_g^+ \\ h_g^-
\end{bmatrix}.
\end{equation}



\subsection{Discrete algorithm}
\label{sec:discrete_algorithm}
For discretized volumes, the steps in~\Cref{sec:cont} need to be performed on sampled data, given as voxel representations on discrete grids. This section explicitly writes out the discrete approximations of~\Cref{sec:cont} and shows how the steps can be performed with low computational cost. A visualization of the transforms used to compute the discretized sliced Wasserstein distance can be found in~\Cref{fig:transforms}.

\begin{figure}[!ht]
	\includegraphics[width=1\textwidth]{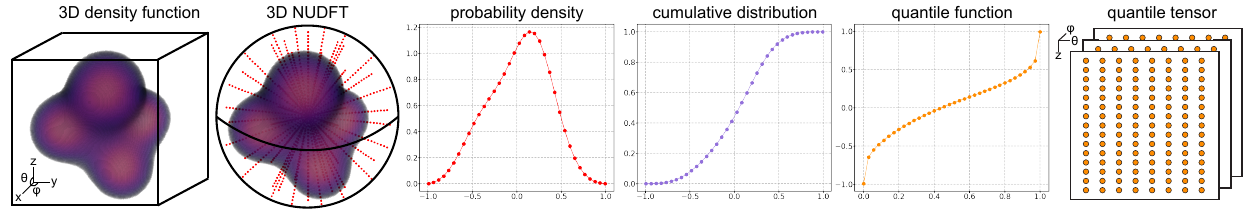}
	\centering
	\caption{Visualization of the transforms used to compute the 3-D sliced Wasserstein distance.}
	\label{fig:transforms}
\end{figure}

\subsubsection{Step 1: transforms}
We define a product grid of rotational angles in 3-D. Let the integers $n_\varphi$ and $n_\theta$ denote the number of grid-points in the two angular directions respectively and define a grid of rotational directions by
\begin{equation}
\varphi_j = \frac{2\pi j}{n_\varphi} , \quad j = 1,\ldots , n_\varphi, \qquad \text{ and } \qquad \theta_k = \frac{\pi k}{n_\theta} , \quad k = 1,\ldots , n_\theta ,
\end{equation}
and a Cartesian grid (with $n_x,n_y,n_z$ grid points in the $x$,$y$,$z$ directions, respectively), defined by
\begin{equation}\label{eq:defcartgrid}
x_p = \frac{2(p-1)}{n_x} - 1, \quad y_q = \frac{2(q-1)}{n_y} - 1, \quad z_i = \frac{2(i-1)}{n_z} - 1, \quad p = 1, \ldots , n_x, \quad q = 1, \ldots , n_y, \quad i = 1, \ldots , n_z. 
\end{equation}

Let $f,g : \mathbb{R}^3 \to \mathbb{R}^+$ be two probability measures which are normalized such that $\int_{\mathbb{R}^3} f(x,y,z) dx dy dz = \int_{\mathbb{R}^3} g(x, y, z) dx dy dz = 1$.
Given samples of $f$ and $g$ on the grid defined by $x_p,y_q,z_i$, we compute two tensors containing approximate projections of $f$ and $g$. Denoting these by $P_f,P_g\in \mathbb{R}^{N \times n_\varphi \times n_\theta}$, their entries are defined as discrete approximations to the projection tensor at $z_i, \theta_j, \varphi_k$. 
The approximated projection operator is given by performing a non-uniform fast Fourier transform (NUFFT)~\cite{Barnett2019Parallel, Barnett2021Aliasing} in Observation~\ref{obs:nufft}, followed by a fast Fourier transform. In detail, the projection operator is given by the following sequence of definitions:

\begin{enumerate}
\item For $\ell = 1, \ldots, n_z$, $j=1, \ldots , n_\varphi$, $k=1, \ldots , n_\theta$, define the NUFFT target points
\begin{align*}
\xi_{\ell j k} &= \frac{n_z}{2}(z_\ell \sin\theta_j\cos\varphi_k,z_\ell \sin\theta_j\sin\varphi_k,z_\ell \cos\theta_j),
\end{align*}
\item Compute the discrete Fourier transform using the NUFFT, as in
\begin{align*}
a_{\ell j k} &= \frac{8}{n_xn_yn_z}\sum_{p=1}^{n_x}\sum_{q=1}^{n_y} \sum_{i=1}^{n_z}f(x_p, y_q, z_i) e^{-\frac{\pi}{2} (x_p,y_q,z_i)\cdot \xi_{\ell j k}}, \quad \ell = 1, \ldots, n_z, \quad j = 1, \ldots, n_\theta, \quad k = 1, \ldots , n_\varphi.
\end{align*}
\item Compute the inverse 1-D discrete Fourier transform of the $a_{\ell j k}$, i.e., compute
\begin{align*}
(P_f)_{i,j,k} &= \frac{1}{2} \sum_{\ell=1}^{n_z} a_{\ell j k} e^{\frac{\pi}{2} n_z z_\ell z_i}, \quad {i = 1, \ldots , n_z,} \quad j = 1, \ldots , n_\theta, \quad k = 1, \ldots , n_\varphi.
\end{align*}
This step can be performed using a standard FFT, after noting that
$$ \sum_{\ell=1}^{n_z} a_{\ell j k} e^{\frac{\pi}{2} n_z z_\ell z_i} = \sum_{\ell=1}^{n_z} a_{\ell j k} e^{2\pi (\ell - n_z/2) (i -n_z/2)/n_z} .
$$
\end{enumerate}

The complexity of the NUFFT step performed with $\ell^1 - \ell^\infty$ relative error $\varepsilon$ is $\mathcal{O}(n_xn_yn_z\log(n_xn_yn_z) + n_zn_\varphi n_\theta |\log \varepsilon |^3)$~\cite{Barnett2019Parallel,Barnett2021Aliasing}. The complexity of the FFT step is $\mathcal{O}(n_\theta n_\varphi n_z \log n_z)$.

 For the ramp-filtered Wasserstein distance, we define a version of $a_{\ell j k}$ rescaled by the radial distance $|\xi_{\ell j k}|^2$, i.e.,
\begin{align}
a_{\ell j k}^s &= \frac{8}{n_xn_yn_z}\sum_{p=1}^{n_x}\sum_{q=1}^{n_y} \sum_{i=1}^{n_z}|\xi_{\ell j k}|^2f(x_p, y_q, z_i) e^{-\frac{\pi}{2} (x_p,y_q,z_i)\cdot \xi_{\ell j k}}, 
\end{align}
for each $\ell = 1, \ldots, n_z, \quad j = 1, \ldots, n_\theta, \quad k = 1, \ldots , n_\varphi.$ Subsequently, we define the signed and normalized Radon transforms
\begin{align*}
(P_f^+)_{i,j,k} &= \max\left(\frac{1}{2} \sum_{\ell=1}^{n_z} a_{\ell j k} e^{\frac{\pi}{2} n_z z_\ell z_i},0\right), \quad (P_f^-)_{i,j,k} = \min\left(\frac{1}{2} \sum_{\ell=1}^{n_z} a_{\ell j k} e^{\frac{\pi}{2} n_z z_\ell z_i},0\right) \\
(\tilde{P}_f^+)_{i,j,k} &= \frac{(P_f^+)_{i,j,k}}{\|P_f^{+}\|_1}, \quad (\tilde{P}_f^-)_{i,j,k} = \frac{(P_f^-)_{i,j,k}}{\|P_f^{-}\|_1}
\end{align*}
for all $i = 1, \ldots , n_z, \quad  j = 1, \ldots , n_\theta, \quad k = 1, \ldots , n_\varphi,$ where we define 
$$
\|P_f^{\pm}\|_1 = \sum_{i,j,k} w_{i,j,k}|(P_f^\pm)_{i,j,k}| \cdot \sin(\theta_k),
$$
for some quadrature weights $w_{i,j,k}$. We note that in our code implementation, the normalization is applied after transforming the signed projections into their respective discretized cdfs (described in the next section), which provides an equivalent normalization.


\subsubsection{Step 2: distances}
To compute the discretized 2-Wasserstein distance, we first convert the discretized projections $P_f$ and $P_g$ into their respective discretized cumulative distribution functions $\text{cdf}_{\theta_j, \varphi_k}f$ and $\text{cdf}_{\theta_j, \varphi_k}g$, which we now define. For fixed $j,k$, we approximate the integral defining cumulative distribution functions by quadrature on the points $z_i$, i.e., for an integer $m$, define
\begin{equation}\label{eq:def_cdf_quadrature}    
(\text{cdf}_{\theta_j, \varphi_k}f)(m) = \sum_{i=0}^m w_i (P_f)_{i,j,k},
\end{equation}
for some choice of quadrature weights $w_i$ for the nodes $z_i$.
Our implementation uses a trapezoid rule, but other choices are possible.

We define $(\text{cdf}_{\theta_j, \varphi_k}f)(s)$ for non-integer $s$ through piecewise linear interpolation.
Then, the discretized quantile function is
\begin{align}\label{eq:def_icdf_quadrature}
(\text{icdf}_{\theta_j, \varphi_k}f)(t) &:= \min\{s : (\text{cdf}_{\theta_j, \varphi_k}f)(s) > t \} .
\end{align}
The squared 2-Wasserstein distance $d_{W_2}^2 (P_{\theta_j,\varphi_k} f , P_{\theta_j,\varphi_k} g)$ is then approximated by
\begin{equation}\label{eq:def_Wass_quadrature}
 dW_{j,k}^2 := \sum_{i=1}^{n_t} w_i\left| (\text{icdf}_{\theta_j, \varphi_k}f)(t_i) - (\text{icdf}_{\theta_j, \varphi_k}g)(t_i) \right|^2,
\end{equation}
where $(t_i,w_i)$, $i= 1, \ldots , n_t$ are some choice of quadrature nodes and weights, respectively. Our code implements a choice of either Gauss-Legendre quarature or the composite Simpson's 1/3-rule.

We lastly approximate $d_{SW_2}^2 (f,g) $ in~\cref{eq:sliced_wasserstein} using $ dW_{j,k}^2$ and quadrature, i.e., compute
\begin{equation}
\label{eq:disc_sliced_wasserstein}
 dSW^2 := \frac{1}{4\pi} \sum_{j=1}^{n_\theta}\sum_{k=1}^{n_\varphi} w_{j,k} dW_{j,k}^2 \sin \theta_j,
\end{equation}
for some weights $w_{j,k}$. 

Specifically, our code implements composite Simpson's 1/3-rule in both the $\theta$ and $\varphi$ variables, but other choices are possible as well. For the ramp-filtered Wasserstein distance, we define $dSW^{\pm}$ as approximations to the two terms in~\cref{eq:ramp_filtered_alignment_2_terms} by replacing $P_f$ by $P_f^\pm$ in~\cref{eq:def_cdf_quadrature} and repeating equations~\cref{eq:def_icdf_quadrature} --~\cref{eq:disc_sliced_wasserstein} verbatim. We remark that this implementation is used for computing the discrete sliced 2-Wasserstein distance, but for rotational distance, we compute the optimizing rotation in~\cref{eq:def_optimal_euler_angles} using only~\cref{eq:def_icdf_quadrature} as described in the next section.

\subsubsection{Step 3: alignment and local refinement}
We use Proposition~\ref{prop:align} to find an optimal rotational alignment in the sliced Wasserstein distance, by aligning the two discretized helper tensors $h_f, h_g \in \mathbb{R}^{n_{t}\times n_\theta \times n_\varphi}$ defined by
\begin{align*}
(h_f)_{i,j,k} = \frac{1}{t_i}(\text{icdf}_{\theta_j, \varphi_k}f)(t_i), \quad (h_g)_{i,j,k} = \frac{1}{t_i}(\text{icdf}_{\theta_j, \varphi_k}g)(t_i), \quad i = 1, \ldots , n_t, \quad j = 1, \ldots , n_\theta, \quad k = 1, \ldots , n_\varphi,
\end{align*}
in the Euclidean distance. Any choice of existing Euclidean distance alignment algorithm can be employed, and we in particular use the algorithm from \cite{KovacsFast2002}. 

Briefly, this algorithm works by using a spherical harmonic representation to provide an efficient basis for rotational alignment. For a fixed radius $t_i$, the tensors of quantile functions $h_f$ and $h_g$ are expanded into spherical harmonic basis functions. The spherical harmonics coefficients of rotations of $h_f$ and $h_g$ are sums of Wigner-D matrices multiplied by the spherical harmonics coefficients of $h_f$ and $h_g$ themselves. This allows the rotational dependence of the alignment problem to be evaluated analytically and without loss of accuracy. The correlation between $h_g$ and all rotations of $h_f$ can therefore be computed by comparing their spherical harmonic coefficients weighted by Wigner-D matrices, enabling fast determination of the optimal rotation. In full detail, this algorithm defines a maximum bandlimit $L$ of the spherical harmonics and sets $n_\theta, n_\varphi = \mathcal{O}(L)$. This gives total complexity $\mathcal{O}\left((n_t+\log L)L^3\right)$, which therefore sets the overall complexity of our approach when $n_t, n_\theta, n_\varphi = \mathcal{O}(N)$. We refer the readers to \cite{KovacsFast2002} for more details.

For the case of the ramp-filtered sliced Wasserstein distance, we note that a small extension of the procedure of \cite{KovacsFast2002} allows for an alignment of the vector-valued functions in~\cref{eq:helper_vector_valued}. The only required change consists in computing the correlation between the vector-valued functions, after which the steps of the procedure can be repeated verbatim.

Lastly, we note that the approach of \cite{KovacsFast2002} computes a search of rotations on a regular grid of Euler angles. It therefore computes an approximation $(\alpha,\beta,\gamma)$ of the optimal Euler angles in~\cref{eq:def_optimal_euler_angles}, with accuracy determined by the resolution of the grid. This in turn improves with increasing $n_z, n_\theta, n_\phi$ and $n_t$, so a more accurate approximation can be obtained by increasing these parameters. However, we note that improved accuracy can also be achieved through a local refinement algorithm around the initially obtained angles $(\alpha,\beta,\gamma)$.

A simple local refinement algorithm consists in repeating the grid search of \cite{KovacsFast2002} on a grid localized around the obtained angles $(\alpha,\beta,\gamma)$. This can be done with small computational cost by storing the correlation between $h_f$ and $h_g$ computed above and replacing the last FFT step in \cite{KovacsFast2002} by a non-uniform FFT onto the localized grid. If desired, this can subsequently be repeated on increasingly fine localized grids. This approach is implemented in our code.

Alternatively, gradient-based local refinement can be used by computing the gradients of~\cref{eq:def_dsw2_theorem} and~\cref{eq:ramp_filtered_alignment_2_terms} with respect to $\alpha, \beta$ and $\gamma$. This can be done in closed-form after expanding $h_f$ and $h_g$ in spherical harmonics, which ensures that the rotational action becomes linear combinations of the spherical harmonic expansion coefficients with Wigner D-matrices.

\section{Stability estimates for the sliced Wasserstein distance in 3-D}
\label{sec:theory}
This section presents two stability estimates for the sliced Wasserstein distance.  Theorem~\ref{thm:translation} gives the exact value of the sliced Wasserstein distance between a function and a translation of itself. Theorem~\ref{thm:deformation_stability} gives an upper bound for the sliced Wasserstein between a function and a (general) deformation of itself. Moreover, we prove that the constant of this bound is sharp. In the context of alignment, after aligning two structures using the presented algorithm, these two results then give exact and lower bounds, respectively, on the degree of heterogeneity between the structures. For the sake of completeness, we present estimates valid for the sliced $p$-Wasserstein distance $d_{SW_p}$, for any $1\leq p < \infty$. This distance is defined replacing the exponent $2$ with $p$
throughout~\cref{eq:sliced_wasserstein}
and~\cref{eq:1d_wasserstein}.

The following result is a three-dimensional counterpart of
\cite[Theorem~3.2]{ShiFast2026}.
\begin{theorem}
\label{thm:translation}
Let $1\leq p<\infty$. Let $f:\mathbb{R}^3\to[0,\infty)$ be a
probability density function. For a translation
vector $s\in\mathbb{R}^3$, define
\[
    (T_s f)(x)=f(x-s).
\]
Then
\[
    d_{SW_p}(f,T_s f)
    =
    (p+1)^{-1/p}\|s\|.
\]
In particular, for $p=2$,
\[
    d_{SW_2}(f,T_s f)
    =
    \frac{1}{\sqrt{3}}\|s\|.
\]
\end{theorem}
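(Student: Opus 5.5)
The plan is to reduce everything to one dimension slice by slice and then average over the sphere. Fix a direction $u=u(\theta,\varphi)\in S^2$ and write $\mu_u$ for the projection $P_{\theta,\varphi}f$, i.e.\ the pushforward of $f$ under $x\mapsto u\cdot x$.

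\textbf{Step 1: the projected translate.} I would show that the projection of $T_s f$ in direction $u$ is $\mu_u(\cdot - u\cdot s)$. This follows from the change of variables $x\mapsto x+s$ in the defining integral \eqref{eq:proj_operator}. Alternatively, in pushforward language, $u\cdot(x+s)=u\cdot x+u\cdot s$.

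\textbf{Step 2: the 1-D distance.} For a one-dimensional probability measure and its translate by $c$, the cumulative distribution function shifts by $c$. Hence the quantile function satisfies $(\volterra\mu(\cdot-c))^{-1}(t)=(\volterra\mu)^{-1}(t)+c$ for every $t$. Plugging this into the $p$-version of \eqref{eq:1d_wasserstein} gives
\[
d_{W_p}^p(\mu_u,\mu_u(\cdot-c))=\int_0^1|c|^p\,dt=|c|^p,
\]
with $c=u\cdot s$. This needs no moment assumptions beyond what makes the quantile functions well defined. The difference of quantiles is exactly the constant $c$, so the integral is finite even if $f$ has no $p$-th moment.

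\textbf{Step 3: averaging over the sphere.} Averaging over the sphere gives
\[
d_{SW_p}^p(f,T_sf)=\frac{1}{4\pi}\int_{S^2}|u\cdot s|^p\,du.
\]
By rotation invariance of the surface measure, I may take $s=\|s\|e_z$, so $u\cdot s=\|s\|\cos\theta$. The integral then becomes
\[
\frac{\|s\|^p}{4\pi}\cdot 2\pi\int_0^\pi|\cos\theta|^p\sin\theta\,d\theta=\frac{\|s\|^p}{2}\int_{-1}^1|r|^p\,dr=\frac{\|s\|^p}{p+1}.
\]
Taking $p$-th roots yields $(p+1)^{-1/p}\|s\|$, and setting $p=2$ gives $\|s\|/\sqrt3$.

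\textbf{Main obstacle.} There is no real difficulty here. The only points needing care are justifying the rotation to $s\parallel e_z$ and checking the quantile shift identity under the paper's convention $\inf\{s:(\volterra\mu)(s)>t\}$. For the rotation, I would use invariance of the measure $\sin\theta\,d\theta\,d\varphi$ under $\mathsf{SO}(3)$; this could also be phrased via Observation~\ref{obs:3}. The quantile identity is immediate from $(\volterra\mu(\cdot-c))(s)=(\volterra\mu)(s-c)$.
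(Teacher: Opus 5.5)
Your proposal is correct and follows essentially the same route as the paper's proof: a change of variables showing that $P_{\theta,\varphi}(T_sf)$ is the translate of $P_{\theta,\varphi}f$ by $u_{\theta,\varphi}\cdot s$, the quantile-shift identity giving $d_{W_p}^p=|u_{\theta,\varphi}\cdot s|^p$, and rotational invariance to set $s=\|s\|e_z$ and evaluate $\frac{\|s\|^p}{2}\int_{-1}^1|t|^p\,dt=\frac{\|s\|^p}{p+1}$. You also note that no $p$-th moment assumption is needed, because the quantile difference is the constant $u\cdot s$ on $(0,1)$; this is a valid small refinement that the paper leaves implicit.
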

The proof is given in \Cref{sec:shift_theory}. The next result extends the above translation analysis to general
spatially varying, mass-preserving deformations.
\begin{theorem}
\label{thm:deformation_stability}
Let $1\leq p<\infty$, and let
$f:\mathbb{R}^3\to[0,\infty)$ be a probability density function. Let $\Psi:\mathbb{R}^3\to\mathbb{R}^3$ be a
$C^1$ diffeomorphism, and let $f_\Psi$ denote the density
obtained by pushing $f$ forward through $\Psi$, namely
\[
    f_\Psi(y)
    =
    f\!\left(\Psi^{-1}(y)\right)
    \left|\det D\Psi^{-1}(y)\right|,
\]
where $D\Psi^{-1}$ denotes the Jacobian matrix of $\Psi^{-1}$.
Then
\[
    d_{SW_p}(f,f_\Psi)
    \leq
    (p+1)^{-1/p}
    \left(
    \int_{\mathbb{R}^3}
    \|\Psi(x)-x\|^p f(x)\,dx
    \right)^{1/p}.
\]
The constant $(p+1)^{-1/p}$ is sharp for every $1\leq p<\infty$.
In particular, for $p=2$,
\[
    d_{SW_2}(f,f_\Psi)
    \leq
    \frac{1}{\sqrt{3}}
    \left(
    \int_{\mathbb{R}^3}
    \|\Psi(x)-x\|^2 f(x)\,dx
    \right)^{1/2}.
\]
\end{theorem}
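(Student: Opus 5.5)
The plan is to bound each one-dimensional Wasserstein distance by an explicit (generally suboptimal) coupling induced by $\Psi$, and then average over directions.

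\textbf{Setup.} Fix a direction $u=u(\theta,\varphi)\in\mathbb{S}^2$. The projection $P_{\theta,\varphi}f$ is the law of $\langle u,X\rangle$ when $X\sim f$. Since $f_\Psi$ is the pushforward of $f$ by $\Psi$, the projection $P_{\theta,\varphi}f_\Psi$ is the law of $\langle u,\Psi(X)\rangle$. The pair $(\langle u,X\rangle,\langle u,\Psi(X)\rangle)$ is a coupling of the two projections. In one dimension the quantile formula~\cref{eq:1d_wasserstein} (with exponent $p$) equals the optimal transport cost, so
\[
d_{W_p}^p(P_{\theta,\varphi}f,P_{\theta,\varphi}f_\Psi)\le \int_{\mathbb{R}^3}\abs{\langle u,\Psi(x)-x\rangle}^p f(x)\,dx .
\]

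\textbf{Averaging over directions.} Average over $\mathbb{S}^2$ with the normalized measure $\frac{1}{4\pi}\sin\theta\,d\varphi\,d\theta$ and apply Fubini. For a fixed vector $v\in\mathbb{R}^3$, Archimedes' theorem says $\langle u,v\rangle/\|v\|$ is uniform on $[-1,1]$. Hence
\[
\frac{1}{4\pi}\int_{\mathbb{S}^2}\abs{\langle u,v\rangle}^p\,du=\|v\|^p\cdot\tfrac12\int_{-1}^1\abs{s}^p\,ds=\frac{\|v\|^p}{p+1}.
\]
Substituting $v=\Psi(x)-x$ gives
\[
d_{SW_p}^p(f,f_\Psi)\le (p+1)^{-1}\int\|\Psi(x)-x\|^p f(x)\,dx ,
\]
and taking $p$-th roots yields the bound. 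The $C^1$ diffeomorphism hypothesis is used only to justify the pushforward density formula. Measurability and Fubini need finiteness of the right-hand side; if it is infinite there is nothing to prove.

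\textbf{Sharpness.} Take $\Psi(x)=x+s$, a translation. Then $f_\Psi=T_sf$ and the right-hand side equals $(p+1)^{-1/p}\|s\|$. Theorem~\ref{thm:translation} shows the left-hand side equals exactly this value, so equality holds and the constant cannot be lowered. Independently, equality for translations is clear: each projected coupling is a 1-D shift by $\langle u,s\rangle$, and a shift is optimal in 1-D, so the inequality in the setup is an equality for every $u$.

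\textbf{Main obstacle.} The only delicate step is justifying that the quantile formula~\cref{eq:1d_wasserstein} equals the infimum over couplings. This is the standard 1-D optimal transport fact (monotone rearrangement is optimal for convex costs $\abs{\cdot}^p$, $p\ge1$), and I would cite it. The rest is the spherical moment computation.
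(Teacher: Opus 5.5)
Your proposal is correct and follows essentially the same route as the paper: the coupling $(u\cdot X,\,u\cdot\Psi(X))$ bounds each projected $d_{W_p}^p$, averaging with Tonelli and the spherical identity $\frac{1}{4\pi}\int_{S^2}|u\cdot v|^p\,du=\|v\|^p/(p+1)$ gives the bound, and translations $\Psi(x)=x+s$ together with Theorem~\ref{thm:translation} give sharpness. One small remark: the integrand is nonnegative, so Tonelli applies whether or not the right-hand side is finite, and your finiteness caveat is unnecessary.
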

The proof is given in \Cref{sec:proof_deformation_stability}. We note that this result can be compared to \cite[Theorem~2.3]{LeebSliced2026}, which proves the bound
\begin{equation}\label{eq:def_max_bound}
        d_{SW_p}(f,f_\Psi)
    \leq \sup_{x\in\mathbb{R}^3}\|\Psi(x)-x\|.
\end{equation}
Since $\left(
    \int_{\mathbb{R}^3}
    \|\Psi(x)-x\|^p f(x)\,dx
    \right)^{1/p} \leq \sup_{x\in\mathbb{R}^3}\|\Psi(x)-x\| \left(
    \int_{\mathbb{R}^3}
    f(x)\,dx
    \right)^{1/p} = \sup_{x\in\mathbb{R}^3}\|\Psi(x)-x\|$ (because $f$ is a probability density function so is non-negative and integrates to $1$),~\Cref{thm:deformation_stability} improves~\cref{eq:def_max_bound}.

In addition to our direct proof, we can also prove~\Cref{thm:deformation_stability} using existing results.
Indeed, for general $p$, we apply the comparison
$d_{SW_p}^p\leq c_{3,p}d_{W_p}^p$ from
\cite[Proposition~5.1.3]{BonnotteUnidimensional2013}.
We evaluate its constant in three dimensions as
\[
    c_{3,p}
    =\frac13\int_{S^2}\|\theta\|_p^p\,d\sigma(\theta)
    =\frac12\int_{-1}^{1}|t|^p\,dt
    =\frac1{p+1},
\]
where $\sigma$ is the uniform probability measure on $S^2$.
Combining this comparison with the coupling induced by
$x\mapsto(x,\Psi(x))$, we obtain our deformation bound
with exactly the same factor $(p+1)^{-1/p}$.

\section{Numerical results}

In this section, we provide numerical results that demonstrate the properties of the sliced 2-Wasserstein distance for comparing voxelized 3-D functions, and for its application to rigid alignment over rotations.

\subsection{Timing results}
First we show the computational runtime to transform the voxelized 3-D data into the quantile tensors used for the sliced 2-Wasserstein and ramp-filtered sliced 2-Wasserstein distance. This step reflects the only precompute time required to transform the data prior to using other voxel-based alignment algorithms. As described in~\Cref{sec:discrete_algorithm}, the complexity of the NUFFT step performed with $\ell^1 - \ell^\infty$ relative error $\varepsilon$ is $\mathcal{O}(n_xn_yn_z\log(n_xn_yn_z) + n_zn_\varphi n_\theta |\log \varepsilon |^3)$. Specifically, we use the Flatiron Institute NUFFT implementation in our code~\cite{Barnett2019Parallel, Barnett2021Aliasing}. The complexity of the subsequent FFT step is $\mathcal{O}(n_\theta n_\varphi n_z \log n_z)$. We show timing results for increasing number of voxels and report the average time for $n=5$ trials in~\Cref{tab:timing}. All timing results are performed using a 2.6 GHz Intel Skylake processor and 32 GB of memory. These results show that the linear transforms required to compute the quantile tensor grow approximately as $\mathcal{O}(N^3 \log N)$. We show timing results for the full alignment algorithm compared to other methods in~\Cref{sec:cryo}.

\begin{table}[h]
\centering
\caption{Quantile tensor transform runtime. Computational runtime for transforming voxel data into the quantile tensor used to compute the the sliced 2-Wasserstein and ramp-filtered sliced 2-Wasserstein distance for increasing $N \times N \times N$ volumes. Values are reported in seconds as the mean and standard deviation of $n=5$ trials.}
\resizebox{\textwidth}{!}{%
\begin{tabular}{cccccc}
\hline
Metric & 32 & 64 & 128 & 256 & 512 \\
\hline
Sliced 2-Wasserstein & 0.023 $\pm$ 0.002 & 0.118 $\pm$ 0.05 & 0.627 $\pm$ 0.03 & 4.796 $\pm$ 0.015 & 39.879 $\pm$ 0.483 \\
Ramp-Filtered Sliced 2-Wasserstein & 0.028 $\pm$ 0.001 & 0.127 $\pm$ 0.001 & 0.828 $\pm$ 0.003 & 6.342 $\pm$ 0.028 & 52.249 $\pm$ 0.619 \\
\hline
\end{tabular}
}
\label{tab:timing}
\end{table}

\subsection{Translation equivariance}
\label{sec:shift_results}

Next, we demonstrate the translational equivariance property of the sliced Wasserstein distance in 3-D. As shown in~\Cref{thm:translation}, given a 3-D function $f(x)$ and its shifted version $f(x - s)$, the sliced Wasserstein distance will change proportionally to the magnitude of the shift. To demonstrate this, we generate a simple 3-D Gaussian as $\exp(-(x^2 + y^2 + z^2)/2\sigma^2)$, with $\sigma=5$, on a $N \times N \times N = 128 \times 128 \times 128 $ voxel grid. We then shift the Gaussian in voxel increments along one axis, corresponding to a distance of $0.015625$ on our defined scale of $[-1, 1]$, and compute the squared Euclidean, sliced 2-Wasserstein and ramp-filtered sliced 2-Wasserstein distance with the original (non-shifted) Gaussian. In~\Cref{fig:shift}, we see a linear response from the optimal transport metrics but not for the Euclidean distance. This result demonstrates the favorable translational equivariance property of the sliced Wasserstein distance.


\begin{figure}[!ht]
	\includegraphics[width=1\textwidth]{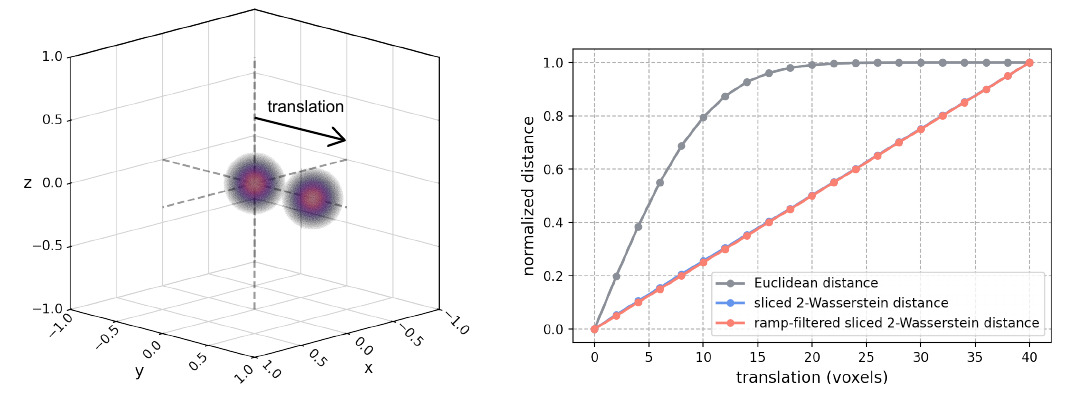}
	\centering
	\caption{Translation equivariance of the sliced 2-Wasserstein distance in 3-D. A reference 3-D volume (standard 3-D Gaussian, with $\sigma=5$), is shifted in voxel increments and the distance is computed in Euclidean, sliced 2-Wasserstein and ramp-filtered sliced 2-Wasserstein distance between the reference and shifted volume. Each metric is normalized by its maximum value for visualization.}
	\label{fig:shift}
\end{figure}

\subsection{ModelNet10 data alignment}

Next we test rotational alignment in sliced Wasserstein distance using the ModelNet10~\cite{ZhirongWu3D2015} dataset. This dataset consists of computer-aided design models from $10$ different categories (bathtub, bed, chair, desk, dresser, monitor, night stand, sofa, table, toilet). The different design of the models in each category provides a challenging dataset for 3-D alignment algorithms. Each model is represented by a mesh that is voxelized into a $N \times N \times N = 64 \times 64 \times 64$ binary occupancy grid using the voxelization implementation in the Trimesh Python library. We then apply a random rotation to each model.

For each model category, we compare the alignment results using the Euclidean, sliced 2-Wasserstein and ramp-filtered sliced 2-Wasserstein distance. The models in each category are aligned to a reference model, which was selected as the model that minimizes the Euclidean distance to the average model of that category. Therefore, we expect the alignment results to favor alignment in Euclidean distance. 

For a qualitative view of the alignment results, we display the first $11$ models in the chair category aligned under each metric in~\Cref{fig:modelnet_align}. We then summarize the alignment results for all models in the ModelNet10 test set, where the number of models per category ranges from $n=50$ to $n=100$. To quantify the alignment results, we compute the geodesic distance between the ground truth rotation matrices and the estimated rotation matrices $R_1, R_2 \in \mathsf{SO}(3)$, as defined by

\begin{equation}
    \label{eq:geodesic_angle_distance}
    \Theta(R_1,R_2) = \cos^{-1} \left( \frac{tr(R_1^\top R_2) - 1}{2} \right) .
\end{equation}
Specifically, because the models are considered to be aligned before the random rotation is applied, we measure how well the estimated alignment rotation matches the initial, random (ground truth) rotation. We then plot the cumulative number of models aligned within increasing radians from the ground truth rotation. We additionally note that some categories may contain symmetric models (e.g., table), which is not accounted for in the geodesic distance. Overall, these results show that the sliced Wasserstein metrics are either comparable, or greatly outperform the Euclidean distance for rotational alignment of the ModelNet10 data.

\begin{figure}[!ht]
	\includegraphics[width=1\textwidth]{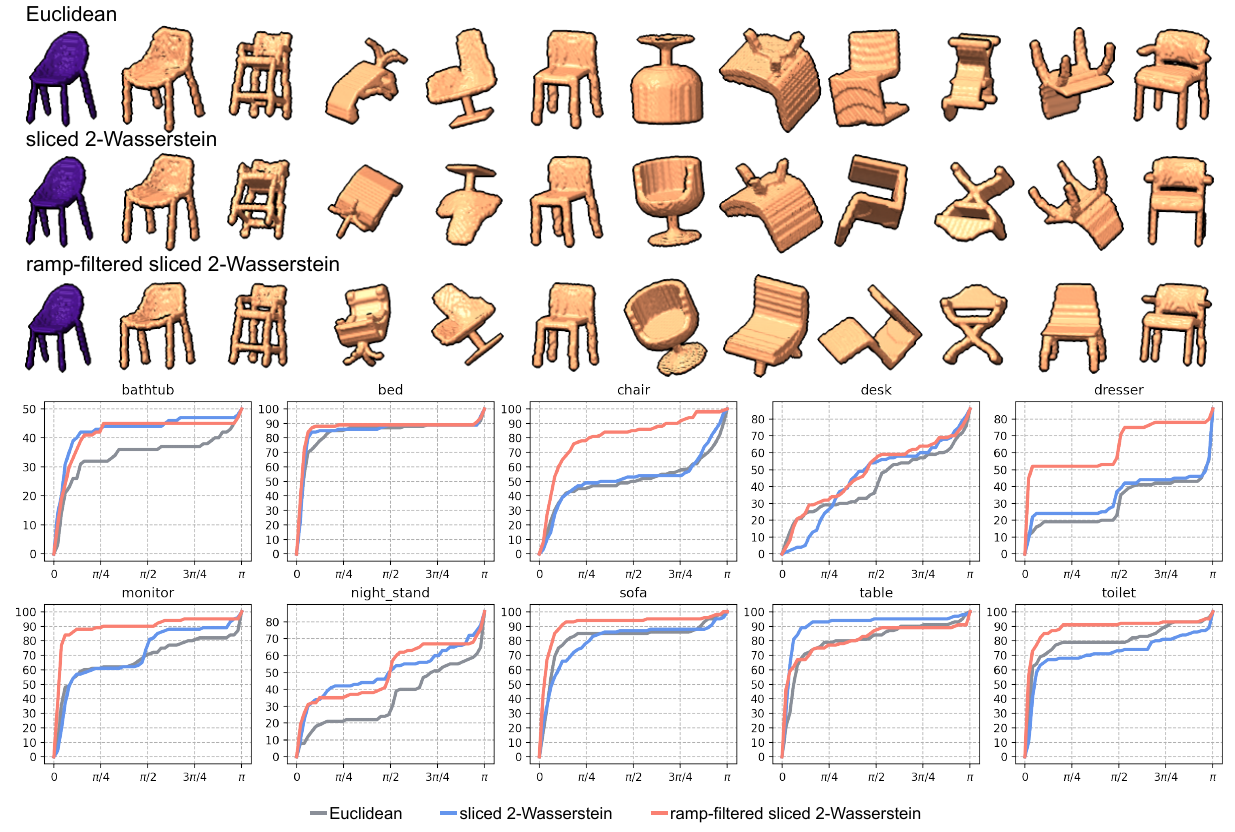}
	\centering
	\caption{Rotational alignment of the ModelNet10 data using different metrics. Models are discretized on a $N \times N \times N = 64 \times 64 \times 64$ voxel grid and are aligned to a reference model in Euclidean, sliced 2-Wasserstein and ramp-filtered sliced 2-Wasserstein distance. (Top) Example alignment of the first $11$ models from the chair category. The reference model is shown in purple and the aligned models are shown in gold. (Bottom) Summary of alignment results for $10$ different categories. Each dataset contains $n \approx 100$ models, and the plots show the number of correctly aligned models as a function of increasing radians under the geodesic distance in~\cref{eq:geodesic_angle_distance}.}
	\label{fig:modelnet_align}
\end{figure}

\subsection{Alignment of 3-D cryo-EM data}
\label{sec:cryo}

A common task in cryo-EM pipelines is to align 3-D models~\cite{PettersenUCSF2021, SingerAlignment2024, RiahiAlignOT2023, HarpazThreedimensional2023, HeAccurate2024}. Cryo-EM data often contains two types of heterogeneity, which can challenge conventional alignment procedures. They are; 1) compositional heterogeneity, where the data contains different underlying structures, and 2) conformational heterogeneity, where the data contains the same underlying structure but with deformations. In~\Cref{sec:homogeneous}, we first benchmark our alignment algorithm compared to other OT alignment methods using cryo-EM data without heterogeneity. Then, in~\Cref{sec:heterogeneous}, we demonstrate the results of our approach on heterogeneous cryo-EM data.

\subsubsection{Homogeneous Alignment}
\label{sec:homogeneous}

First we benchmark our alignment algorithm using 3-D models from published cryo-EM data without heterogeneity. We compare our algorithm to two other OT based methods including BOTalign~\cite{SingerAlignment2024} and AlignOT~\cite{RiahiAlignOT2023}. We test the alignment algorithms for $5$ different cryo-EM volumes from the Electron Microscopy Data Bank (EMDB)~\cite{ThewwPDBConsortium2024EMDB} ranging in molecular weight from $\approx 170 - 2000$ kDa, and resolution from $\approx 2.5 - 20 \text{\AA}$, as shown in~\Cref{fig:cryo_vols}. In this section, we refer to our method as Fast Sliced Wasserstein 3-D (FSW3D), and specifically use the ramp-filtered sliced 2-Wasserstein version of the algorithm for our results.

\begin{figure}[!ht]
	\includegraphics[width=1\textwidth]{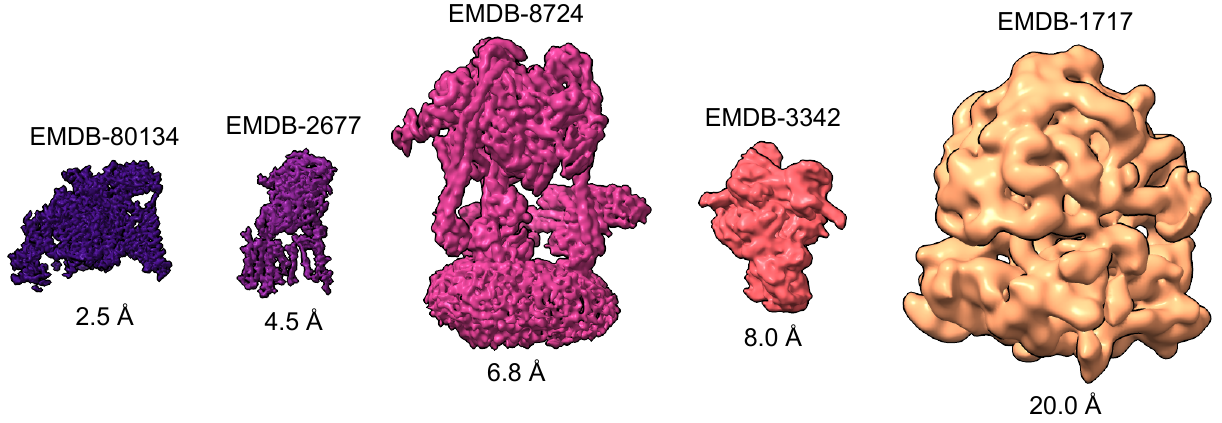}
	\centering
	\caption{Cryo-EM volumes from the EMDB~\cite{ThewwPDBConsortium2024EMDB} used for the homogeneous alignment experiments.}
	\label{fig:cryo_vols}
\end{figure}
In the first experiment, each volume is downsampled to $N \times N \times N = 64 \times 64 \times 64$ using the cryo-EM software ASPIRE~\cite{GarrettWright2026ComputationalCryoEM}. To compare the different algorithms, we compute the geodesic distance~\cref{eq:geodesic_angle_distance} between the estimated rotation alignment matrix and the ground truth rotation matrix for $n=50$ trials, and report the mean and standard deviation in degrees, shown in~\Cref{tab:alignment_accuracy}. Euler angles for the ground truth rotations are selected uniformly and randomly as integers in $\{0, \ldots ,359\}$. For BOTalign, we use the hyperparameter $\{T=150\}$ for the total number of iterations. For AlignOT, we use the cryo-EM volume threshold reported in the EMDB (required by AlignOT) and the hyperparameters $\{n=500, \alpha=0.1, \epsilon=100, L=100\}$ which correspond to the number of points, the learning rate parameter, the regularization parameter and the maximum iterations, respectively. We additionally compare our method and BOTalign with and without their respective local refinement procedure. We also note that for AlignOT, alignment results tend to improve when the initialization is good~\cite{RiahiAlignOT2023} (i.e., the rotated volume is close to the original), however that is not the setting we are testing here. Our results show that, in general, FSW3D with local refinement is the best performing algorithm. Furthermore, we note that because our algorithm is deterministic and does not require hyperparameters or good initialization, the variance is lower than the other methods. 

\begin{table}[!ht]
\centering
\caption{Alignment accuracy results. The geodesic distance is computed between the estimated rotation and ground truth rotation for $n=50$ random rotations of $5$ different cryo-EM volumes from the EMDB~\cite{ThewwPDBConsortium2024EMDB}, and is reported as the mean and standard deviation. The volumes are first downsampled to $N \times N \times N = 64 \times 64 \times 64$. Values are reported in degrees.}
\begin{tabular}{lccccc}
\hline
 & EMD-80134 & EMD-2677 & EMDB-8724 & EMD-3342 & EMD-1717 \\
\hline

FSW3D            & $2.12 \pm 0.77$ & $2.12 \pm 0.77$ & $2.14 \pm 0.78$ & $2.13 \pm 0.76$ & $2.12 \pm 0.77$ \\

FSW3D + local    & $0.09 \pm 0.04$ & $0.13 \pm 0.05$ & $0.21 \pm 0.09$ & $0.22 \pm 0.1$ & $0.08 \pm 0.04$ \\

BOTalign         & $3.52 \pm 3.11$ & $9.54 \pm 19.18$ & $16.4 \pm 35.29$ & $2.65 \pm 1.19$ & $3.51 \pm 3.08$ \\

BOTalign + local & $0.36 \pm 0.48$ & $0.21 \pm 0.09$ & $12.11 \pm 35.94$ & $0.22 \pm 0.1$ & $0.07 \pm 0.03$ \\

AlignOT          & $78.04 \pm 67.81$ & $30.47 \pm 43.86$ & $124.67 \pm 44.92$ & $107.31 \pm 56.86$ & $119.77 \pm 36.97$ \\

\hline
\end{tabular}
\label{tab:alignment_accuracy}
\end{table}
Next we report the runtimes for each algorithm with volumes of size $N = \{32, 64, 128\}$ in~\Cref{tab:alignment_timing}. The table shows the mean and standard deviation of $n=50$ trials, reported in seconds. All results in~\Cref{sec:homogeneous} were done on a computer a 2.6 GHz Intel Skylake processor and 32 GB of memory. The reported computational complexity of each method is $\mathcal{O}(N^4)$ for FSW3D, $\mathcal{O}(T N^3 \log N)$ for BOTalign and $\mathcal{O}(n^2 L \log n \epsilon^{-3})$ for AlignOT, where $n$ is the number of points in the point cloud and $L$ is the number of iterations. We note that for AlignOT, the complexity is dependent on the user defined number of points rather than the input number of voxels. However, because no guidance is given for selecting this parameter, we keep it fixed at $n=500$ rather than scaling it linearly or cubically with the number of voxels. The more direct comparison between FSW3D and BOTalign shows that FSW3D has faster runtime at all sizes, even though the asymptotic complexity is worse. This section demonstrates that FSW3D has higher accuracy and faster runtimes among other OT based alignment algorithms. 

\begin{table}[!ht]
\centering
\caption{Timing results for EMDB-1717 at increasing number of voxels, $N = \{32, 64, 128\}$. Reported values are the mean and standard deviation of $n=50$ trials, and are shown in seconds.}
\begin{tabular}{lccccc}
\hline
 & FSW3D & FSW3D + local & BOTalign & BOTalign + local & AlignOT \\
\hline

$N=32$   & $0.23 \pm 0.01$ & $0.38 \pm 0.07$ & $6.57 \pm 0.14$ & $12.25 \pm 1.9$ & $4.83 \pm 0.04$ \\

$N=64$   & $0.95 \pm 0.02$ & $1.47 \pm 0.19$ & $37.35 \pm 0.55$ & $42.9 \pm 1.27$ & $4.81 \pm 0.05$ \\

$N=128$  & $7.58 \pm 0.08$ & $10.94 \pm 1.38$ & $128.47 \pm 0.87$ & $142.2 \pm 2.9$ & $6.17 \pm 0.25$ \\

\hline
\end{tabular}
\label{tab:alignment_timing}
\end{table}

\subsubsection{Heterogeneous Alignment}
\label{sec:heterogeneous}

Next we test alignment results of our algorithm using heterogeneous cryo-EM data. Specifically, we focus on the case of conformational heterogeneity. We use a dataset from~\cite{ZhongCryoDRGN2021} that contains an $N \times N \times N = 128 \times 128 \times 128$ volume with simulated motion represented by $n=50$ discrete states. In~\Cref{fig:cryo}a, the models are aligned to the initial (reference) volume to show the axis of motion in the data. We then compute the distance, not over rotations, between the initial volume and each of the subsequent volumes in motion using the Euclidean, sliced 2-Wasserstein and ramp-filtered sliced 2-Wasserstein distance. In this example of simple motion, the Wasserstein metrics demonstrate the favorable property of stability under deformation. However, this property will not always hold for more complicated motion. Nonetheless, this result suggests that, given a set of aligned volumes, the Wasserstein metrics can provide a more descriptive measure of similarity.

Next, in~\Cref{fig:cryo}b, we compare the rotational alignment between the initial and final volume using each metric. Notably, the alignment from all metrics yield different orientations. In particular, the Euclidean distance appears to align the `head' of the volume to be aligned correctly, but the `leg' no longer overlaps with the reference. Meanwhile, the Wasserstein metrics appear to keep all parts of the volumes closer together, but with some parts aligned in different orientations (e.g. the `head' is rotated compared to the reference). This result demonstrates that the different metrics yield distinct alignment results for heterogeneous data and that, depending on the task, one metric might be preferred over the other.

\begin{figure}[!ht]
	\includegraphics[width=1\textwidth]{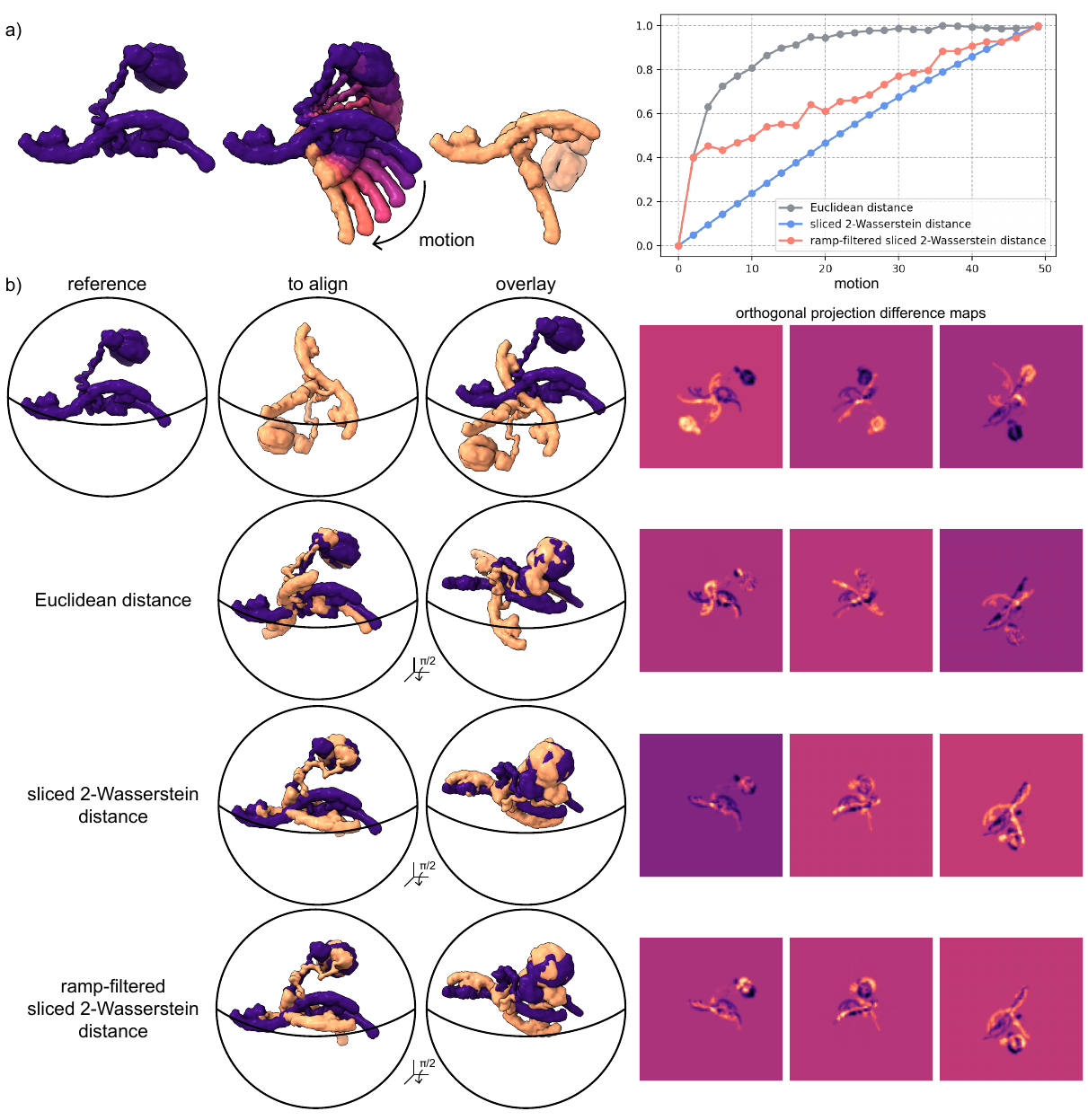}
	\centering
	\caption{Distance and alignment of heterogeneous cryo-EM data. (a) Cryo-EM volumes of size $N \times N \times N = 128\times 128\times 128$ with simulated motion (see~\cite{ZhongCryoDRGN2021} for more details about the data). The distances are computed between the initial volume and the subsequent volumes with increasing state of motion. (b) Rotational alignment of the final volume to the initial volume under each metric. Orthogonal projection plots show the difference between the initial and final volume after rotational alignment, projected onto each orthogonal plane. Alignment in both sliced Wasserstein distances gives similar results which are distinct from the alignment in Euclidean distance.}
	\label{fig:cryo}
\end{figure}

\section{Discussion}

In this work, we develop a framework for fast alignment over $\mathsf{SO}(3)$ in the sliced 2-Wasserstein distance. Additionally, we provide theoretical results on the stability of sliced 2-Wasserstein distance in 3-D. This work extends the applicability of optimal transport and sliced probability metrics to the 3-D case, in particular for high-dimensional data that requires alignment over a large grid of rotations. The ability to rapidly compute optimal transport based metrics for large, voxelized data will allow for an alternative way to compare objects with distinct 3-D structures. We also note that this algorithm is readily implementable on GPUs.

Several important areas remain to be addressed to increase the applicability of this method. For example, this analysis assumes that the input volumes are strictly positive and have the same initial mass. However it is common to have mean-subtracted or noisy data. Another interesting topic to explore comes from observing that there is no true alignment between non-identical functions. To analyze this further, one could test the downstream effects of algorithms that first require 3-D alignment by comparing the results from both the Euclidean distance and sliced Wasserstein distance based alignments, similar to the work in~\cite{WoollardImproving2025}. Other algorithmic choices worth exploring include the choice of grid points, frequency marching schemes~\cite{KruseFast2026}, multigrid schemes, numerical integration methods, interpolation schemes, and connections to related invariant metrics~\cite{BeckmannGeneralizations2026}.




\clearpage
\newpage

\appendix
\section{Proofs}
\subsection{Proof of Theorem~\ref{thm:translation}}
\label{sec:shift_theory}



\begin{proof}
Define
\[
    u_{\theta,\varphi}:=R_{\theta,\varphi}^\top e_z
    =
    (\sin\theta\cos\varphi,\sin\theta\sin\varphi,\cos\theta)^\top .
\]
Thus $u_{\theta,\varphi}$ is the projection direction corresponding to
$P_{\theta,\varphi}$. Write $g(x) = f(x-s)$. For each $z\in\mathbb{R}$,
\[
\begin{aligned}
(P_{\theta,\varphi}g)(z)
&=
\int_{\mathbb{R}^2}
g\!\left(R_{\theta,\varphi}^\top (x,y,z)^\top\right)\,dx\,dy \\
&=
\int_{\mathbb{R}^2}
f\!\left(R_{\theta,\varphi}^\top (x,y,z)^\top - s\right)\,dx\,dy \\
&=
\int_{\mathbb{R}^2}
f\!\left(R_{\theta,\varphi}^\top\big((x,y,z)^\top - R_{\theta,\varphi}s\big)\right)\,dx\,dy .
\end{aligned}
\]
Writing $R_{\theta,\varphi}s=((R_{\theta,\varphi}s)_1,(R_{\theta,\varphi}s)_2,
(R_{\theta,\varphi}s)_3)^\top$ and changing variables
\[
    x'=x-(R_{\theta,\varphi}s)_1,\qquad
    y'=y-(R_{\theta,\varphi}s)_2,
\]
we obtain
\[
\begin{aligned}
(P_{\theta,\varphi}g)(z)
&=
\int_{\mathbb{R}^2}
f\!\left(R_{\theta,\varphi}^\top
(x',y',z-(R_{\theta,\varphi}s)_3)^\top\right)
\,dx'\,dy' \\
&=
(P_{\theta,\varphi}f)\!\left(z-(R_{\theta,\varphi}s)_3\right).
\end{aligned}
\]
Since
\[
    (R_{\theta,\varphi}s)_3
    =
    e_z^\top R_{\theta,\varphi}s
    =
    (R_{\theta,\varphi}^\top e_z)^\top s
    =
    u_{\theta,\varphi}\cdot s,
\]
we have
\[
    P_{\theta,\varphi}g
    =
    \tau_{u_{\theta,\varphi}\cdot s} P_{\theta,\varphi}f,
\]
where $\tau_a \mu(z)=\mu(z-a)$ denotes translation by $a$ in one dimension.

For any one-dimensional probability density $\mu$ and any $a\in\mathbb{R}$,
the quantile function of $\tau_a\mu$ is shifted by $a$:
\[
    (\mathcal{V}{\tau_a\mu})^{-1}(t)
    =
    (\mathcal{V}\mu)^{-1}(t)+a .
\]
Therefore,
\[
    d_{W_p}^p(\mu,\tau_a\mu)
    =
    \int_0^1 |a|^p\,dt
    =
    |a|^p .
\]
Applying this with
$a=u_{\theta,\varphi}\cdot s$ gives
\[
    d_{W_p}^p(P_{\theta,\varphi}f,P_{\theta,\varphi}g)
    =
    |u_{\theta,\varphi}\cdot s|^p .
\]
Hence
\[
\begin{aligned}
d_{SW_p}^p(f,T_s f)
&=
\frac{1}{4\pi}
\int_0^\pi
\int_0^{2\pi}
|u_{\theta,\varphi}\cdot s|^p
\sin\theta\,d\varphi\,d\theta .
\end{aligned}
\]
By rotational invariance of the surface measure on $S^2$, the integral depends
on $s$ only through $\|s\|$. Thus we may assume without loss of generality that
$s=\|s\|e_z$. Then
\[
    u_{\theta,\varphi}\cdot s=\|s\|\cos\theta,
\]
and so
\[
\begin{aligned}
d_{SW_p}^p(f,T_s f)
&=
\frac{\|s\|^p}{4\pi}
\int_0^\pi
\int_0^{2\pi}
|\cos\theta|^p\sin\theta\,d\varphi\,d\theta \\
&=
\frac{\|s\|^p}{2}
\int_0^\pi
|\cos\theta|^p\sin\theta\,d\theta \\
&=
\frac{\|s\|^p}{2}
\int_{-1}^{1}
|t|^p\,dt \\
&=
\frac{\|s\|^p}{2}\cdot \frac{2}{p+1}
=
\frac{1}{p+1}\|s\|^p .
\end{aligned}
\]
Taking the $p$-th root completes the proof.
\end{proof}

\subsection{Proof of Theorem~\ref{thm:deformation_stability}}\label{sec:proof_deformation_stability}
\begin{proof}
For $(\theta,\varphi)\in[0,\pi]\times[0,2\pi)$, let
\[
    u_{\theta,\varphi}
    =
    (\sin\theta\cos\varphi,
     \sin\theta\sin\varphi,
     \cos\theta)^\top
\]
be the corresponding projection direction. 
Let $X$ be an $\mathbb{R}^3$-valued random vector with density
$f$, and set $Y=\Psi(X)$, so that $Y$ has density $f_\Psi$.
Fix $(\theta,\varphi)$ and write $u=u_{\theta,\varphi}$.
For every Borel set $A\subseteq\mathbb{R}$,
\[
    \mathbb{P}(u\cdot X\in A)
    =
    \int_{\{x:\,u\cdot x\in A\}} f(x)\,dx
    =
    \int_A (P_{\theta,\varphi}f)(t)\,dt,
\]
where the last equality follows by integrating $f$ over
the planes perpendicular to $u$. Thus, $u\cdot X$ has density
$P_{\theta,\varphi}f$. The same argument shows that
$u\cdot Y$ has density $P_{\theta,\varphi}f_\Psi$.
Consequently, the joint law of
\[
    (u\cdot X,\;u\cdot Y)
    =
    (u\cdot X,\;u\cdot\Psi(X))
\]
is a coupling of these two projected densities. Therefore,
by the Kantorovich formulation of the Wasserstein distance
\cite{VillaniOptimal2009, PeyreComputational2019},
\[
\begin{aligned}
d_{W_p}^p
\left(
P_{\theta,\varphi}f,
P_{\theta,\varphi}f_\Psi
\right)
&\leq
\mathbb{E}\left[
    \left|u\cdot Y-u\cdot X\right|^p
\right] \\
&=
\int_{\mathbb{R}^3}
\left|
u_{\theta,\varphi}\cdot
\big(\Psi(x)-x\big)
\right|^p f(x)\,dx.
\end{aligned}
\]
Averaging this inequality over all projection directions and applying
Tonelli's theorem gives
\[
\begin{aligned}
d_{SW_p}^p(f,f_\Psi)
&\leq
\int_{\mathbb{R}^3}
\left[
\frac{1}{4\pi}
\int_0^\pi
\int_0^{2\pi}
\left|
u_{\theta,\varphi}\cdot
\big(\Psi(x)-x\big)
\right|^p
\sin\theta\,d\varphi\,d\theta
\right]
f(x)\,dx .
\end{aligned}
\]
For every $v\in\mathbb{R}^3$, rotational invariance of the uniform
surface measure on $S^2$ implies
\[
\begin{aligned}
\frac{1}{4\pi}
\int_0^\pi
\int_0^{2\pi}
|u_{\theta,\varphi}\cdot v|^p
\sin\theta\,d\varphi\,d\theta
&=
\frac{\|v\|^p}{2}
\int_{-1}^{1}|t|^p\,dt 
&=
\frac{1}{p+1}\|v\|^p .
\end{aligned}
\]
Applying this identity with $v=\Psi(x)-x$ yields
\begin{equation}\label{eq:def_bound_proof}
    d_{SW_p}^p(f,f_\Psi)
    \leq
    \frac{1}{p+1}
    \int_{\mathbb{R}^3}
    \|\Psi(x)-x\|^p f(x)\,dx .
\end{equation}
Taking the $p$-th root proves the stated bound. 

To prove sharpness of the bound, we choose $\Psi_s(x)=x+s$. The push-forward density is
\[
    f_{\Psi_s}(x)=f(x-s)=T_s f(x),
\]
and its density-weighted displacement is exactly
\[
    \left(
    \int_{\mathbb{R}^3}
    \|\Psi_s(x)-x\|^p f(x)\,dx
    \right)^{1/p}
    =
    \|s\|.
\]
~\cref{eq:def_bound_proof} therefore gives
\[
    d_{SW_p}(f,T_s f)
    \leq
    (p+1)^{-1/p}\|s\|.
\]
Theorem~\ref{thm:translation} shows that equality always holds. Thus, the translation theorem is the exact equality case of the general deformation bound and proves that its constant cannot in general be improved. For a general nonlinear deformation, the coupling used in the proof need not be optimal after every one-dimensional projection, so the inequality can be strict.
\end{proof}

\printbibliography

@article{Barnett2021Aliasing,
  title = {Aliasing Error of the Exp  ( {$\beta$} 1 - z 2 ) Kernel in the Nonuniform Fast {{Fourier}} Transform},
  author = {Barnett, Alex H.},
  year = 2021,
  month = mar,
  journal = {Applied and Computational Harmonic Analysis},
  volume = {51},
  pages = {1--16},
  issn = {10635203},
  doi = {10.1016/j.acha.2020.10.002},
  url = {https://linkinghub.elsevier.com/retrieve/pii/S1063520320300725}
}

@article{Barnett2019Parallel,
  title = {A {{Parallel Nonuniform Fast Fourier Transform Library Based}} on an ``{{Exponential}} of {{Semicircle}}" {{Kernel}}},
  author = {Barnett, Alexander H. and Magland, Jeremy and Af Klinteberg, Ludvig},
  year = 2019,
  month = jan,
  journal = {SIAM Journal on Scientific Computing},
  volume = {41},
  number = {5},
  pages = {C479-C504},
  issn = {1064-8275, 1095-7197},
  doi = {10.1137/18M120885X},
  url = {https://epubs.siam.org/doi/10.1137/18M120885X}
}

@article{BeckmannGeneralizations2026,
  title = {Generalizations of the {{Normalized Radon Cumulative Distribution Transform}} for {{Limited Data Recognition}}},
  author = {Beckmann, Matthias and Beinert, Robert and Bresch, Jonas},
  year = 2026,
  month = aug,
  journal = {Journal of Mathematical Imaging and Vision},
  volume = {68},
  number = {4},
  pages = {45},
  issn = {0924-9907, 1573-7683},
  doi = {10.1007/s10851-026-01327-4},
  url = {https://link.springer.com/10.1007/s10851-026-01327-4}
}

@article{BendorySingleParticle2020,
  title = {Single-{{Particle Cryo-Electron Microscopy}}: {{Mathematical Theory}}, {{Computational Challenges}}, and {{Opportunities}}},
  author = {Bendory, Tamir and Bartesaghi, Alberto and Singer, Amit},
  year = 2020,
  month = mar,
  journal = {IEEE Signal Processing Magazine},
  volume = {37},
  number = {2},
  pages = {58--76},
  issn = {1053-5888, 1558-0792},
  doi = {10.1109/MSP.2019.2957822},
  url = {https://ieeexplore.ieee.org/document/9016106/}
}

@article{BeslMethod1992,
  title = {A Method for Registration of 3-{{D}} Shapes},
  author = {Besl, P.J. and McKay, Neil D.},
  year = 1992,
  month = feb,
  journal = {IEEE Transactions on Pattern Analysis and Machine Intelligence},
  volume = {14},
  number = {2},
  pages = {239--256},
  issn = {0162-8828, 2160-9292},
  doi = {10.1109/34.121791},
  url = {http://ieeexplore.ieee.org/document/121791/}
}

@article{BingJianRobust2011,
  title = {Robust {{Point Set Registration Using Gaussian Mixture Models}}},
  author = {{Bing Jian} and Vemuri, B C},
  year = 2011,
  month = aug,
  journal = {IEEE Transactions on Pattern Analysis and Machine Intelligence},
  volume = {33},
  number = {8},
  pages = {1633--1645},
  issn = {0162-8828},
  doi = {10.1109/TPAMI.2010.223},
  url = {http://ieeexplore.ieee.org/document/5674050/}
}

@misc{BonetSpherical2022,
  title = {Spherical {{Sliced-Wasserstein}}},
  author = {Bonet, Cl{\'e}ment and Berg, Paul and Courty, Nicolas and Septier, Fran{\c c}ois and Drumetz, Lucas and Pham, Minh-Tan},
  year = 2022,
  publisher = {arXiv},
  doi = {10.48550/ARXIV.2206.08780},
  url = {https://arxiv.org/abs/2206.08780}
}

@phdthesis{BonnotteUnidimensional2013,
  type = { {{PhD}} thesis},
  title = {Unidimensional and {{Evolution Methods}} for {{Optimal Transportation}}},
  author = {Bonnotte, Nicolas},
  year = 2013,
  month = dec,
  doi = {10.70675/2833ed85z6cd3z4115z99bazba760e7494de},
  url = {https://theses.fr/2013PA112297}
}

@article{CuturiSinkhorn2013,
  title = {Sinkhorn {{Distances}}: {{Lightspeed Computation}} of {{Optimal Transportation Distances}}},
  author = {Cuturi, Marco},
  year = 2013,
  publisher = {arXiv},
  doi = {10.48550/ARXIV.1306.0895},
  url = {https://arxiv.org/abs/1306.0895}
}

@article{GillesCryoEM2025,
  title = {Cryo-{{EM}} Heterogeneity Analysis Using Regularized Covariance Estimation and Kernel Regression},
  author = {Gilles, Marc Aur{\`e}le and Singer, Amit},
  year = 2025,
  month = mar,
  journal = {Proceedings of the National Academy of Sciences},
  volume = {122},
  number = {9},
  pages = {e2419140122},
  issn = {0027-8424, 1091-6490},
  doi = {10.1073/pnas.2419140122},
  url = {https://pnas.org/doi/10.1073/pnas.2419140122}
}

@article{HarpazThreedimensional2023,
  title = {Three-Dimensional Alignment of Density Maps in Cryo-Electron Microscopy},
  author = {Harpaz, Yael and Shkolnisky, Yoel},
  year = 2023,
  journal = {Biological Imaging},
  volume = {3},
  pages = {e8},
  issn = {2633-903X},
  doi = {10.1017/S2633903X23000089},
  url = {https://www.cambridge.org/core/product/identifier/S2633903X23000089/type/journal_article}
}

@article{HeAccurate2024,
  title = {Accurate Global and Local {{3D}} Alignment of Cryo-{{EM}} Density Maps Using Local Spatial Structural Features},
  author = {He, Bintao and Zhang, Fa and Feng, Chenjie and Yang, Jianyi and Gao, Xin and Han, Renmin},
  year = 2024,
  month = feb,
  journal = {Nature Communications},
  volume = {15},
  number = {1},
  pages = {1593},
  issn = {2041-1723},
  doi = {10.1038/s41467-024-45861-4},
  url = {https://www.nature.com/articles/s41467-024-45861-4}
}

@incollection{HutchisonUnique2010,
  title = {Unique {{Signatures}} of {{Histograms}} for {{Local Surface Description}}},
  booktitle = {Computer {{Vision}} -- {{ECCV}} 2010},
  author = {Hutchison, David and Kanade, Takeo and Kittler, Josef and Kleinberg, Jon M. and Mattern, Friedemann and Mitchell, John C. and Naor, Moni and Nierstrasz, Oscar and Pandu Rangan, C. and Steffen, Bernhard and Sudan, Madhu and Terzopoulos, Demetri and Tygar, Doug and Vardi, Moshe Y. and Weikum, Gerhard and Tombari, Federico and Salti, Samuele and Di Stefano, Luigi},
  year = 2010,
  volume = {6313},
  pages = {356--369},
  publisher = {Springer Berlin Heidelberg},
  doi = {10.1007/978-3-642-15558-1_26},
  url = {http://link.springer.com/10.1007/978-3-642-15558-1_26},
  isbn = {978-3-642-15557-4}
}

@article{KastritisCapturing2017,
  title = {Capturing Protein Communities by Structural Proteomics in a Thermophilic Eukaryote},
  author = {Kastritis, Panagiotis L and O'Reilly, Francis J and Bock, Thomas and Li, Yuanyue and Rogon, Matt Z and Buczak, Katarzyna and Romanov, Natalie and Betts, Matthew J and Bui, Khanh Huy and Hagen, Wim J and Hennrich, Marco L and Mackmull, Marie-Therese and Rappsilber, Juri and Russell, Robert B and Bork, Peer and Beck, Martin and Gavin, Anne-Claude},
  year = 2017,
  month = jul,
  journal = {Molecular Systems Biology},
  volume = {13},
  number = {7},
  pages = {MSB167412},
  issn = {1744-4292},
  doi = {10.15252/msb.20167412},
  url = {https://link.springer.com/10.15252/msb.20167412}
}

@article{KhooNonIterative2016,
  title = {Non-{{Iterative Rigid 2D}}/{{3D Point-Set Registration Using Semidefinite Programming}}},
  author = {Khoo, Yuehaw and Kapoor, Ankur},
  year = 2016,
  month = jul,
  journal = {IEEE Transactions on Image Processing},
  volume = {25},
  number = {7},
  pages = {2956--2970},
  issn = {1057-7149, 1941-0042},
  doi = {10.1109/TIP.2016.2540810},
  url = {http://ieeexplore.ieee.org/document/7430328/}
}

@misc{KolouriGeneralized2019,
  title = {Generalized {{Sliced Wasserstein Distances}}},
  author = {Kolouri, Soheil and Nadjahi, Kimia and Simsekli, Umut and Badeau, Roland and Rohde, Gustavo K.},
  year = 2019,
  publisher = {arXiv},
  doi = {10.48550/ARXIV.1902.00434},
  url = {https://arxiv.org/abs/1902.00434}
}

@inproceedings{KolouriGeneralized2022,
  title = {Generalized {{Sliced Probability Metrics}}},
  booktitle = {{{ICASSP}} 2022 - 2022 {{IEEE International Conference}} on {{Acoustics}}, {{Speech}} and {{Signal Processing}} ({{ICASSP}})},
  author = {Kolouri, Soheil and Nadjahi, Kimia and Shahrampour, Shahin and Simsekli, Umut},
  year = 2022,
  month = may,
  pages = {4513--4517},
  publisher = {IEEE},
  doi = {10.1109/ICASSP43922.2022.9746016},
  url = {https://ieeexplore.ieee.org/document/9746016/},
  isbn = {978-1-6654-0540-9}
}

@article{KolouriRadon2016,
  title = {The {{Radon Cumulative Distribution Transform}} and {{Its Application}} to {{Image Classification}}},
  author = {Kolouri, Soheil and Park, Se Rim and Rohde, Gustavo K.},
  year = 2016,
  month = feb,
  journal = {IEEE Transactions on Image Processing},
  volume = {25},
  number = {2},
  pages = {920--934},
  issn = {1057-7149, 1941-0042},
  doi = {10.1109/TIP.2015.2509419},
  url = {http://ieeexplore.ieee.org/document/7358128/}
}

@article{KovacsFast2002,
  title = {Fast Rotational Matching},
  author = {Kovacs, Julio A. and Wriggers, Willy},
  year = 2002,
  month = jul,
  journal = {Acta Crystallographica Section D Biological Crystallography},
  volume = {58},
  number = {8},
  pages = {1282--1286},
  issn = {09074449},
  doi = {10.1107/S0907444902009794},
  url = {https://journals.iucr.org/paper?S0907444902009794}
}

@misc{KruseFast2026,
  title = {Fast {{Volume Alignment}} by {{Frequency-Marched Newton Method}}},
  author = {Kruse, Fabian and Debarnot, Valentin and Kishore, Vinith and Dokmani{\'c}, Ivan},
  year = 2026,
  month = jul,
  publisher = {arXiv},
  doi = {10.48550/ARXIV.2603.15285},
  url = {https://arxiv.org/abs/2603.15285}
}

@article{KyatkinAlgorithms2000,
  title = {Algorithms for {{Fast Convolutions}} on {{Motion Groups}}},
  author = {Kyatkin, Alexander B and Chirikjian, Gregory S},
  year = 2000,
  month = sep,
  journal = {Applied and Computational Harmonic Analysis},
  volume = {9},
  number = {2},
  pages = {220--241},
  issn = {10635203},
  doi = {10.1006/acha.2000.0321},
  url = {https://linkinghub.elsevier.com/retrieve/pii/S106352030090321X}
}

@misc{LeebSliced2026,
  title = {On Sliced {{Cram\'er}} Metrics},
  author = {Leeb, William},
  year = 2026,
  month = aug,
  publisher = {arXiv},
  doi = {10.48550/ARXIV.2508.02678},
  url = {https://arxiv.org/abs/2508.02678}
}

@article{OvsjanikovFunctional2012,
  title = {Functional Maps: A Flexible Representation of Maps between Shapes},
  author = {Ovsjanikov, Maks and {Ben-Chen}, Mirela and Solomon, Justin and Butscher, Adrian and Guibas, Leonidas},
  year = 2012,
  month = aug,
  journal = {ACM Transactions on Graphics},
  volume = {31},
  number = {4},
  pages = {1--11},
  issn = {0730-0301, 1557-7368},
  doi = {10.1145/2185520.2185526},
  url = {https://dl.acm.org/doi/10.1145/2185520.2185526}
}

@article{PettersenUCSF2021,
  title = {{{{\textsc{UCSF ChimeraX}}}} : {{Structure}} Visualization for Researchers, Educators, and Developers},
  author = {Pettersen, Eric F. and Goddard, Thomas D. and Huang, Conrad C. and Meng, Elaine C. and Couch, Gregory S. and Croll, Tristan I. and Morris, John H. and Ferrin, Thomas E.},
  year = 2021,
  month = jan,
  journal = {Protein Science},
  volume = {30},
  number = {1},
  pages = {70--82},
  issn = {0961-8368, 1469-896X},
  doi = {10.1002/pro.3943},
  url = {https://onlinelibrary.wiley.com/doi/10.1002/pro.3943}
}

@article{PeyreComputational2019,
  title = {Computational {{Optimal Transport}} with {{Applications}} to {{Data Sciences}}},
  author = {Peyr{\'e}, Gabriel and Cuturi, Marco},
  year = 2019,
  month = feb,
  journal = {Foundations and Trends in Machine Learning},
  volume = {11},
  number = {5-6},
  pages = {355--607},
  issn = {1935-8237, 1935-8245},
  doi = {10.1561/2200000073},
  url = {https://www.emerald.com/ftmal/article/11/5-6/355/1332396/Computational-Optimal-Transport-with-Applications}
}

@article{QuellmalzSliced2023,
  title = {Sliced Optimal Transport on the Sphere},
  author = {Quellmalz, Michael and Beinert, Robert and Steidl, Gabriele},
  year = 2023,
  month = oct,
  journal = {Inverse Problems},
  volume = {39},
  number = {10},
  pages = {105005},
  issn = {0266-5611, 1361-6420},
  doi = {10.1088/1361-6420/acf156},
  url = {https://iopscience.iop.org/article/10.1088/1361-6420/acf156}
}

@incollection{RabinWasserstein2012,
  title = {Wasserstein {{Barycenter}} and {{Its Application}} to {{Texture Mixing}}},
  booktitle = {Scale {{Space}} and {{Variational Methods}} in {{Computer Vision}}},
  author = {Rabin, Julien and Peyr{\'e}, Gabriel and Delon, Julie and Bernot, Marc},
  year = 2012,
  volume = {6667},
  pages = {435--446},
  publisher = {Springer Berlin Heidelberg},
  doi = {10.1007/978-3-642-24785-9_37},
  url = {http://link.springer.com/10.1007/978-3-642-24785-9_37},
  isbn = {978-3-642-24784-2}
}

@article{RiahiAlignOT2023,
  title = {{{AlignOT}}: {{An Optimal Transport Based Algorithm}} for {{Fast 3D Alignment With Applications}} to {{Cryogenic Electron Microscopy Density Maps}}},
  author = {Riahi, Aryan Tajmir and Woollard, Geoffrey and Poitevin, Fr{\'e}d{\'e}ric and Condon, Anne and Duc, Khanh Dao},
  year = 2023,
  month = nov,
  journal = {IEEE/ACM Transactions on Computational Biology and Bioinformatics},
  volume = {20},
  number = {6},
  pages = {3842--3850},
  issn = {1545-5963, 1557-9964, 2374-0043},
  doi = {10.1109/TCBB.2023.3327633},
  url = {https://ieeexplore.ieee.org/document/10298813/}
}

@article{ShiFast2026,
  title = {Fast {{Rigid Alignment}} of {{Heterogeneous Images}} in {{Sliced Wasserstein Distance}}},
  author = {Shi, Yunpeng and Singer, Amit and Verbeke, Eric J.},
  year = 2026,
  month = sep,
  journal = {SIAM Journal on Imaging Sciences},
  volume = {19},
  number = {3},
  pages = {1469--1515},
  issn = {1936-4954},
  doi = {10.1137/25M1742916},
  url = {https://epubs.siam.org/doi/10.1137/25M1742916}
}

@article{SingerAlignment2024,
  title = {Alignment of Density Maps in {{Wasserstein}} Distance},
  author = {Singer, Amit and Yang, Ruiyi},
  year = 2024,
  journal = {Biological Imaging},
  volume = {4},
  pages = {e5},
  issn = {2633-903X},
  doi = {10.1017/S2633903X24000059},
  url = {https://www.cambridge.org/core/product/identifier/S2633903X24000059/type/journal_article}
}

@article{SingerComputational2020,
  title = {Computational {{Methods}} for {{Single-Particle Electron Cryomicroscopy}}},
  author = {Singer, Amit and Sigworth, Fred J.},
  year = 2020,
  month = jul,
  journal = {Annual Review of Biomedical Data Science},
  volume = {3},
  number = {1},
  pages = {163--190},
  issn = {2574-3414, 2574-3414},
  doi = {10.1146/annurev-biodatasci-021020-093826},
  url = {https://www.annualreviews.org/doi/10.1146/annurev-biodatasci-021020-093826}
}

@article{SolomonConvolutional2015,
  title = {Convolutional Wasserstein Distances: Efficient Optimal Transportation on Geometric Domains},
  author = {Solomon, Justin and De Goes, Fernando and Peyr{\'e}, Gabriel and Cuturi, Marco and Butscher, Adrian and Nguyen, Andy and Du, Tao and Guibas, Leonidas},
  year = 2015,
  month = jul,
  journal = {ACM Transactions on Graphics},
  volume = {34},
  number = {4},
  pages = {1--11},
  issn = {0730-0301, 1557-7368},
  doi = {10.1145/2766963},
  url = {https://dl.acm.org/doi/10.1145/2766963}
}

@article{TajmirRiahiAlignment2025,
  title = {Alignment of {{Partially Overlapping Cryo-EM Maps Using Unbalanced Gromov-Wasserstein Divergence}}},
  author = {Tajmir Riahi, Aryan and Zhang, Chenwei and Condon, Anne and Chen, James and Dao Duc, Khanh},
  year = 2025,
  month = apr,
  journal = {PRX Life},
  volume = {3},
  number = {2},
  pages = {023003},
  issn = {2835-8279},
  doi = {10.1103/PRXLife.3.023003},
  url = {https://link.aps.org/doi/10.1103/PRXLife.3.023003}
}

@misc{TranStereographic2024,
  title = {Stereographic {{Spherical Sliced Wasserstein Distances}}},
  author = {Tran, Huy and Bai, Yikun and Kothapalli, Abihith and Shahbazi, Ashkan and Liu, Xinran and Martin, Rocio Diaz and Kolouri, Soheil},
  year = 2024,
  publisher = {arXiv},
  doi = {10.48550/ARXIV.2402.02345},
  url = {https://arxiv.org/abs/2402.02345}
}

@article{VerbekeClassification2018,
  title = {Classification of {{Single Particles}} from {{Human Cell Extract Reveals Distinct Structures}}},
  author = {Verbeke, Eric J. and Mallam, Anna L. and Drew, Kevin and Marcotte, Edward M. and Taylor, David W.},
  year = 2018,
  month = jul,
  journal = {Cell Reports},
  volume = {24},
  number = {1},
  pages = {259-268.e3},
  issn = {22111247},
  doi = {10.1016/j.celrep.2018.06.022},
  url = {https://linkinghub.elsevier.com/retrieve/pii/S2211124718309173}
}

@book{VillaniOptimal2009,
  title = {Optimal {{Transport}}: {{Old}} and {{New}}},
  author = {Villani, C{\'e}dric},
  year = 2009,
  volume = {338},
  publisher = {Springer Berlin Heidelberg},
  doi = {10.1007/978-3-540-71050-9},
  url = {http://link.springer.com/10.1007/978-3-540-71050-9},
  isbn = {978-3-540-71049-3}
}

@misc{WoollardImproving2025,
  title = {Improving {{Cryo-EM Optimization Robustness}} with an {{Optimal Transport Loss Function}} for {{Noisy Images}}},
  author = {Woollard, Geoffrey and Herreros, David and Li, Minhuan and Cossio, Pilar and Duc, Khanh Dao},
  year = 2025,
  month = dec,
  publisher = {bioRxiv},
  doi = {10.64898/2025.12.23.696001},
  url = {http://biorxiv.org/lookup/doi/10.64898/2025.12.23.696001}
}

@inproceedings{ZhirongWu3D2015,
  title = {{{3D ShapeNets}}: {{A}} Deep Representation for Volumetric Shapes},
  booktitle = {2015 {{IEEE Conference}} on {{Computer Vision}} and {{Pattern Recognition}} ({{CVPR}})},
  author = {{Zhirong Wu} and Song, Shuran and Khosla, Aditya and {Fisher Yu} and {Linguang Zhang} and {Xiaoou Tang} and Xiao, Jianxiong},
  year = 2015,
  month = jun,
  pages = {1912--1920},
  publisher = {IEEE},
  doi = {10.1109/CVPR.2015.7298801},
  url = {https://ieeexplore.ieee.org/document/7298801/},
  isbn = {978-1-4673-6964-0}
}

@article{ZhongCryoDRGN2021,
  title = {{{CryoDRGN}}: Reconstruction of Heterogeneous Cryo-{{EM}} Structures Using Neural Networks},
  author = {Zhong, Ellen D. and Bepler, Tristan and Berger, Bonnie and Davis, Joseph H.},
  year = 2021,
  month = feb,
  journal = {Nature Methods},
  volume = {18},
  number = {2},
  pages = {176--185},
  issn = {1548-7091, 1548-7105},
  doi = {10.1038/s41592-020-01049-4},
  url = {https://www.nature.com/articles/s41592-020-01049-4}
}

@article{ThewwPDBConsortium2024EMDB,
  title = {{{EMDB}}---the {{Electron Microscopy Data Bank}}},
  author = {{The wwPDB Consortium} and Turner, Jack and Abbott, Sanja and Fonseca, Neli and Pye, Ryan and Carrijo, Lucas and Duraisamy, Amudha Kumari and Salih, Osman and Wang, Zhe and Kleywegt, Gerard J and Morris, Kyle L and Patwardhan, Ardan and Burley, Stephen K and Crichlow, Gregg and Feng, Zukang and Flatt, Justin W and Ghosh, Sutapa and Hudson, Brian P and Lawson, Catherine L and Liang, Yuhe and Peisach, Ezra and Persikova, Irina and Sekharan, Monica and Shao, Chenghua and Young, Jasmine and Velankar, Sameer and Armstrong, David and Bage, Marcus and Bueno, Wesley Morellato and Evans, Genevieve and Gaborova, Romana and Ganguly, Sudakshina and Gupta, Deepti and Harrus, Deborah and Tanweer, Ahsan and Bansal, Manju and Rangannan, Vetriselvi and Kurisu, Genji and Cho, Hasumi and Ikegawa, Yasuyo and Kengaku, Yumiko and Kim, Ju Yaen and Niwa, Satomi and Sato, Junko and Takuwa, Ayako and Yu, Jian and Hoch, Jeffrey C and Baskaran, Kumaran and Xu, Wenqing and Zhang, Weizhe and Ma, Xiaodan},
  year = 2024,
  month = jan,
  journal = {Nucleic Acids Research},
  volume = {52},
  number = {D1},
  pages = {D456-D465},
  issn = {0305-1048, 1362-4962},
  doi = {10.1093/nar/gkad1019},
  url = {https://academic.oup.com/nar/article/52/D1/D456/7442543}
}

@misc{GarrettWright2026ComputationalCryoEM,
  title = {{{ComputationalCryoEM}}/{{ASPIRE-Python}}: V0.14.3},
  author = {Garrett Wright and Joakim And{\'e}n and Vineet Bansal and Junchao Xia and Chris Langfield and Josh Carmichael and Kris Sowattanangkul and Robbie Brook and Yunpeng Shi and Ayelet Heimowitz and Gabi Pragier and Itay Sason and Amit Moscovich and Yoel Shkolnisky and Amit Singer},
  year = 2026,
  month = apr,
  doi = {10.5281/ZENODO.5657281},
  url = {https://zenodo.org/doi/10.5281/zenodo.5657281},
  howpublished = {Zenodo}
}

@book{Kak2001Principles,
  title = {Principles of {{Computerized Tomographic Imaging}}},
  author = {Kak, Avinash C. and Slaney, Malcolm},
  year = 2001,
  month = jan,
  publisher = {{Society for Industrial and Applied Mathematics}},
  doi = {10.1137/1.9780898719277},
  url = {https://epubs.siam.org/doi/book/10.1137/1.9780898719277}
}

\section*{Acknowledgments}
A.S. was supported in part by AFOSR FA9550-23-1-0249, DARPA HR0011-25-3-E002, NSF DMS 2510039,  Biohub FP00007575, and the Simons Foundation Math+X Investigator Award; Y.S. was supported by NSF DMS 2514152.

\end{document}